\newtheorem{theorem}{Theorem}
\newtheorem*{cor*}{Corollary}
\newtheorem{lem}[theorem]{Lemma}
\newtheorem{pro}[theorem]{Proposition}
\newcommand{\Kappa}{{K}}
\definecolor{green}{rgb}{0.,0.5,0.5}
\definecolor{dblue}{rgb}{0,0,1}
\newcommand{\zbar}{{\overline{z}}}
\newcommand{\sigmabar}{{\overline{\sigma}}}
\newcommand{\thetabar}{{\overline{\theta}}}
\newcommand{\Kappabar}{{\overline{\Kappa}}}
\newcommand{\ibar}{{\overline{\imath}}}
\newcommand{\vbar}{{\overline{v}}}
\newcommand{\Cbar}{{\overline{C}}}
\newcommand{\etabar}{{\overline{\eta}}}
\newcommand{\XTX}[1]{{\bm B}^\dagger{\bm {\mathcal{T}}}(#1){\bm B}}
\newcommand{\C}{\mathbb{C}}
\newcommand{\R}{\mathbb{R}}
\newcommand{\Tn}{{\bm T}}
\newcommand{\Te}{\bm{\mathcal{T}}\!\!}
\newcommand{\Tnet}{{\bm{\mathcal{T}}^{\rm net}}}
\newcommand{\Tnod}{{\bm T}^{\rm nod}}
\newcommand{\bmat}[1]{\begin{bmatrix} #1 \end{bmatrix}}
\begin{document}

\title{Small-signal stability of power systems with voltage droop}


\author{Jakob Niehues,~\IEEEmembership{Graduate Student Member,~IEEE,}
\thanks{Corresponding author: 
\href{mailto:jakob.niehues@pik-potsdam.de}{jakob.niehues@pik-potsdam.de}}
\thanks{Potsdam Institute for Climate Impact Research (PIK),
 Member of the Leibniz Association, P.O. Box 60 12 03, D-14412 Potsdam, Germany (J.N., A.B., F.H.)}
 \thanks{
Technische Universit\"at Berlin, ER 3-2, Hardenbergstrasse 36a, 10623 Berlin, Germany (J.N.)}
Robin Delabays,~\IEEEmembership{Non-Member,~IEEE,}
\thanks{School of Engineering, University of Applied Sciences of Western Switzerland HES-SO, Sion, Switzerland (R.D.)}
Anna Büttner,~\IEEEmembership{Member,~IEEE,}
Frank Hellmann,~\IEEEmembership{Member,~IEEE}
}

\maketitle

\begin{abstract}
The stability of inverter-dominated power grids remains an active area of research. This paper presents novel sufficient conditions for ensuring small-signal stability in lossless and constant $R/X$ grids with highly heterogeneous mixes of grid-forming inverters that implement an adapted $V$–$q$ droop control. The proposed conditions can be evaluated in the neighborhood of each bus without information on the rest of the grid.  Apart from the presence of $V$–$q$ droop, no additional assumptions are made regarding the inverter control strategies, nor is dynamical homogeneity across the system assumed. The analysis is enabled by recasting the node dynamics in terms of complex frequency and power, resulting in transfer functions that directly capture the small-signal frequency and amplitude responses to active and reactive power imbalances. These transfer functions are directly aligned with typical design considerations in grid-forming control. Building on an adapted small-phase theorem and viewing the system as a closed feedback loop between nodes and lines, the derived stability conditions also yield new insights when applied to established inverter control designs. We demonstrate in simulations that our conditions are not overly conservative and can identify individual inverters that are misconfigured and cause instability.

\end{abstract}

\begin{IEEEkeywords}
grid-forming control, droop control, complex frequency, voltage source converter, small-signal stability
\end{IEEEkeywords}

\section{Introduction}

The analysis of the small-signal stability of multi-machine power grids is one of the central topics of power grid analysis. The main result of the seminal paper of \cite{bergen_structure_1981} was to give conditions under which multiple machines and loads, modeled as oscillators, are stable to small perturbations.

Since then, a plethora of results from power engineering \cite{machowski_power_2012}, control theory \cite{dorfler_synchronization_2013, yang_distributed_2020, schiffer_conditions_2014} and theoretical physics \cite{witthaut_collective_2022, bottcher_stability_2023} have expanded our understanding of the small signal stability of power systems. However, it remains an active topic of research \cite{woolcock_mixed_2023, pates_scalable_2017, persis_bregman_2016, huang_gain_2024}.
In recent years, the topic has gained renewed interest with the introduction of grid-forming converters, which are expected to independently stabilize the synchronous operation of highly renewable future power grids \cite{christensen_high_2020}. Grid-forming control remains an active topic of research, and additionally, often detailed device models are not published by the vendor \cite{dysko_testing_nodate,chen_generalized_2022,haberle_grid-forming_2024}.
There is a wide range of stability results for concrete control strategies, as reviewed in \cite{he_quantitative_2024}. 
However, most of them are {\it ad hoc} and do not generalize naturally to other control schemes.

In this paper we give a decentralized stability condition based on the transfer functions that describe how a grid-forming node's frequency and relative voltage velocity react to deviations from power, reactive power and voltage set points. 
Remarkably, our results are technology-neutral and apply to all grid-forming nodal actors for which the response to reactive power and voltage set point deviations is proportional, which is an established principle, see for example \cite{schiffer_survey_2016, schmietendorf_self-organized_2014}.

The variables used in this work correspond to working with the complex frequency \cite{milano_complex_2022} and describing the network state using time-invariant variables that nevertheless fully characterize the operating state at the desired frequency \cite{kogler_normal_2022, buttner_complex_2024}. Such variables have been shown to be highly effective for identifying grid-forming behavior in the grid \cite{buttner_complex-phase_2024}. As we will see, an advantage of working in these quantities is that the transfer matrices do not depend on arbitrary quantities such as phase angles. The resulting stability conditions are more explicit, simpler and more easily interpreted than, for example, those of \cite{huang_gain_2024, woolcock_mixed_2023, yang_distributed_2020}. In particular, the transfer matrices often do not explicitly depend on the operation point around which we linearize, and the conditions can be mapped back to system parameters immediately. We demonstrate this by recovering several classical results as special cases.

As in \cite{huang_gain_2024, woolcock_mixed_2023}, the central ingredient to our result is the small phase theory introduced in \cite{chen_phase_2024}. A companion paper to this work \cite{kastendiek_phase_2025} explores the application of this approach to the broad class of adaptive dynamical networks \cite{berner_adaptive_2023} and demonstrates that these methods can match necessary conditions in that setting. This approach can be seen as an extensive generalization of passivity. Passivity-based methods have been previously used to derive decentral stability conditions for scalar networked systems \cite{lestas_scalable_2007} and for power grids \cite{yang_distributed_2020, he_passivity_2023}. We improve on these results by giving more broadly applicable conditions that are fully decentralized and less conservative. Similar results were independently obtained in \cite{haberle_decentralized_2025}, however, only for a heavily restricted class of models when compared to our results.

\section{Statement of the main result}
We begin by presenting the key assumptions and the main result, using the bare minimum of notation and concepts necessary to state them. For clarity, we first treat lossless systems. The case of homogeneous ratio of resistance to reactance is treated in section \ref{sec:lossy lines}.

We assume a lossless grid with admittance $\bm Y$, which is a Laplace matrix.
Denote nodal complex voltages $\bm v = \bm v_d + j \bm v_q$, a vector with components
\begin{align}
    v_n(t) = V_n(t) e^{j \varphi_n(t)} \; ,
\end{align}
with phase $\varphi_n$ and amplitude $V_n$.
The nodal current injections are $\bm \imath = \bm Y \bm v$, and the nodal power injections $p_n+j q_n = v_n \ibar_n$. 

Quantities at the operating point are written with a superscript~$^\circ$.
In the co-rotating frame with the grid's nominal frequency, the operating point is given by constant $v_n^\circ$ that induce $V_n^\circ$, $\varphi_n^\circ$, and a power flow solution $p_n^\circ$, $q_n^\circ$ matching the set point.

We assume that the dynamics of the nodes can be formulated in terms of the complex frequency $\eta_n:=\dot v_n / v_n$ (see~\cite{milano_complex_2022,kogler_normal_2022} for details).
Its real part $\varrho_n = \dot V_n/{V_n} $ is the relative amplitude velocity, and its imaginary part, $\omega_n = \dot \varphi_n$, is the angular velocity, which is proportional to the frequency. Without loss of generality, we take the complex frequency at the operational state to be equal to zero: $\omega^\circ = \varrho^\circ = 0$. In practical terms, this assumption implies that all nodes have some amount of grid-forming capability.

We can understand the behavior of a broad class of dynamical actors in power grids by considering how their complex frequency reacts to changes in the network state. Near the power flow solution of interest, we can consider the linearized response in terms of the transfer functions.
From this perspective, grid-forming actors take the current as input and supply a voltage as output. We will focus our analysis on systems that implement a droop relationship between voltage and reactive power. This droop relationship is typical in models of power grid actors \cite{schmietendorf_self-organized_2014,schiffer_conditions_2014}. We will use $p$ and the shifted reactive power $\hat q_n \coloneqq q_n + \alpha_n V_n$ that implements the $V$-$q$ droop relationship with proportionality coefficient $\alpha_n \in \mathbb{R}$ as input for the nodes.

We then have four transfer functions $T_n^{\bullet\bullet}(s)\in\mathbb{C}$ that describe the nodal behavior near the power flow of interest:
\begin{align}
 \bmat{\varrho_n \\ \omega_n} = -\bmat{T_n^{\varrho \hat q} & T_n^{\varrho p} \\ T_n^{\omega \hat q} & T_n^{\omega p} } \bmat{\Delta \hat q_n \\ \Delta p_n} =: -\Tn_n \bmat{\Delta \hat q_n \\ \Delta p_n}\, ,
 \label{eq:definition_entries_T_n}
\end{align}
where all quantities except $\alpha_n$ depend on the Laplace frequency $s$.

Following \cite{kogler_normal_2022}, the matrix elements of $\Tn_n(s)$ are expected to only depend on $p^\circ$, $q^\circ$ and $V^\circ$, but not on the complex voltage $v_n^\circ$ directly. As $v_n^\circ$ is only defined uniquely up to phase, this is a key advantage of working in terms of phase shift invariant quantities like $p$, $q$ and $\eta$ rather than, say, $\dot v$, $\vbar$, and $\imath$, $\ibar$. This mirrors the choice of power and polar coordinates in \cite{yang_distributed_2020}.
Our main result is:

\begin{pro}[Small-signal stability of power grids with $V$-$q$ droop]
\label{pro:main:power_grid}
    Consider a lossless power grid with admittance matrix $\bm Y$ and an operating point with voltage phase angles $\varphi^\circ_n$ and magnitudes $V_n^\circ$, and $ \bm T_n(s)$ the transfer function matrices from $\hat q_n$, $p_n$, to $\varrho_n$ and $\omega_n$ for some $\alpha_n$.
    
    The operating point is linearly stable if $|\varphi_n^\circ - \varphi_m^\circ| < \pi /2$ for all $n$ and $m$ connected by a line, the $\Tn_n(s)$ are internally stable, and for all $s\in[0,\infty]$ it holds
\begin{align}
        \Re(T_n^{\varrho \hat q}) + \Re(T_n^{\omega p}) & > 0\, ,
        \label{eq:tr_T_nodes_definite}
        \\
        \Re(T_n^{\varrho \hat q}) \cdot \Re(T_n^{\omega p}) &> \frac{1}{4}\left|T_n^{\varrho p} + \overline T_n^{\omega \hat q}\right|^2\, ,
        \label{eq:det_T_nodes_positive}
        \\
        \alpha_n & \ge 2 \sum_m \tilde Y_{nm}  \frac{V_m^\circ}{\cos(\varphi^\circ_n - \varphi^\circ_m)} \, .\label{eq:alpha_bound_T_lines_definite}
\end{align}
   
\end{pro}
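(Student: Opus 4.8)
The plan is to recast the linearized dynamics as a negative feedback interconnection between a block-diagonal \emph{node} operator $\Tn(s)=\mathrm{diag}_n\Tn_n(s)$ and a \emph{line} operator $\Tn^{\mathrm{lines}}(s)$, and then certify closed-loop stability with the small-phase theorem of \cite{chen_phase_2024}. First I would linearize the two algebraic relations that couple the nodes to the network. The complex-frequency definition $\eta_n=\dot v_n/v_n$ linearizes, around $\eta^\circ=0$, to $\Delta\eta_n(s)=s\,\Delta v_n(s)/v_n^\circ$, so the node outputs $(\varrho_n,\omega_n)=(\Re\Delta\eta_n,\Im\Delta\eta_n)$ reconstruct $\Delta v_n$ through the integrator $1/s$ together with the fixed operating-point phase. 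Feeding $\Delta\bm v$ through $\Delta\bm\imath=\bm Y\Delta\bm v$ and the power definition $p_n+jq_n=v_n\bar\imath_n$, and applying the droop shift $\Delta\hat q_n=\Delta q_n+\alpha_n\Delta V_n$, produces the node inputs $(\Delta\hat q_n,\Delta p_n)$. This yields $(\Delta\hat{\bm q},\Delta\bm p)=\Tn^{\mathrm{lines}}(s)\,(\bm\varrho,\bm\omega)$, so the loop gain is $\Tn(s)\,\Tn^{\mathrm{lines}}(s)$.

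Next I would reinterpret the two node conditions as a single positive-definiteness statement. For the $2\times2$ matrix $\Tn_n$, the expression in \eqref{eq:tr_T_nodes_definite} is the trace and the expression in \eqref{eq:det_T_nodes_positive} is the determinant of the Hermitian part $\tfrac12(\Tn_n+\Tn_n^\dagger)$; a $2\times2$ Hermitian matrix with positive trace and positive determinant is positive definite. Hence \eqref{eq:tr_T_nodes_definite}--\eqref{eq:det_T_nodes_positive} are equivalent to $\Tn_n(s)+\Tn_n(s)^\dagger\succ0$ on the evaluated contour, i.e.\ each node matrix is sectorial with all eigenphases strictly inside $(-\pi/2,\pi/2)$, and by block-diagonality $\Tn(s)$ inherits the same strict phase bound.

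The technical core is the line operator. I would show that $\Tn^{\mathrm{lines}}(s)$ factorizes as the scalar integrator $1/s$ times a real symmetric network Jacobian $\bm M$ assembled from the lossless power-flow relations. Its active-power/angle block is a weighted graph Laplacian with weights proportional to $\cos(\varphi_n^\circ-\varphi_m^\circ)$, which is positive semidefinite precisely when $|\varphi_n^\circ-\varphi_m^\circ|<\pi/2$; its reactive-power/voltage block is rendered positive semidefinite exactly when the diagonal droop term dominates the off-diagonal couplings, which is the Gershgorin / diagonal-dominance content of \eqref{eq:alpha_bound_T_lines_definite}. Since $1/(j\omega)$ contributes a rotation by $\mp\pi/2$, the eigenphases of $\Tn^{\mathrm{lines}}(j\omega)$ sit at $\mp\pi/2$, i.e.\ in the sector complementary to the nodes. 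With the node phases strictly inside $(-\pi/2,\pi/2)$ and the line phases at the boundary $\mp\pi/2$, the small-phase theorem's phase sum stays strictly between $-\pi$ and $\pi$ over the whole contour; together with the assumed internal stability of the $\Tn_n$ and well-posedness this gives internal stability of the feedback loop, hence linear stability of the operating point.

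The main obstacle I anticipate lies in this third step: deriving $\Tn^{\mathrm{lines}}(s)$ cleanly in the phase-shift-invariant $p$, $q$, $\eta$ coordinates and pinning down its sectoriality at the boundary. The integrator introduces a pole at $s=0$ and the Laplacian $\bm Y$ has the global phase-shift mode in its kernel, so $\bm M$ is only positive \emph{semi}definite and $\Tn^{\mathrm{lines}}$ is merely semi-sectorial; one must check that these marginal directions decouple or are absorbed by the endpoint cases $s\in\{0,\infty\}$, and that the strict node inequalities supply the slack needed to keep the combined phase off $\pm\pi$ despite the non-strict line bound \eqref{eq:alpha_bound_T_lines_definite}. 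Equally delicate is the sign bookkeeping that guarantees the droop shift places the reactive block in the \emph{same} sector as the angle block rather than the opposite one; this is exactly what condition \eqref{eq:alpha_bound_T_lines_definite} encodes, and getting its constant right --- including the $V_m^\circ/\cos(\varphi_n^\circ-\varphi_m^\circ)$ weighting --- is where the linearization must be carried out with care.
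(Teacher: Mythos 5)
Your overall architecture matches the paper's: a feedback interconnection of a block-diagonal node operator with an integrator-times-Hermitian network operator, node sectoriality certified by positive definiteness of the Hermitian part (your trace/determinant reading of \eqref{eq:tr_T_nodes_definite}--\eqref{eq:det_T_nodes_positive} is exactly the paper's Lemma on strictly accretive $\Tn_n$), and closure via the small-phase theorem. The genuine gap is in your third step, the positive semidefiniteness of the network matrix. You verify the two \emph{diagonal} blocks separately: the $p$--$\varphi$ block as a $\cos$-weighted Laplacian, and the $\hat q$--$V$ block by Gershgorin/diagonal dominance. But the lossless power-flow Jacobian does not decouple: the cross blocks $\partial p_n/\partial V_m$ and $\partial q_n/\partial\varphi_m$ are proportional to $\sin(\varphi_n^\circ-\varphi_m^\circ)$ and are nonzero at any loaded operating point, and positive semidefiniteness of the two diagonal blocks does not imply positive semidefiniteness of the full matrix. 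The shape of the bound \eqref{eq:alpha_bound_T_lines_definite} makes this concrete: Gershgorin on the $\hat q$--$V$ block alone would produce a condition with $V_m^\circ\cos(\varphi_n^\circ-\varphi_m^\circ)$ (cosine in the numerator), whereas the correct constant carries $V_m^\circ/\cos(\varphi_n^\circ-\varphi_m^\circ)$; the $1/\cos$ factor, which diverges as phase differences approach $\pi/2$, is precisely the price of dominating the $\sin$ cross-couplings, so your route cannot yield the stated constant.

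The paper closes exactly this hole with a different decomposition: each droop coefficient is split into per-edge shares via $\alpha_n = -2V_n^\circ\sum_{m\neq n}L_{nm}\alpha'_{nm}$, and the Hermitian network Jacobian is written as $\bm{B_+}^\dagger\bigoplus_e \bm J_e\,\bm{B_+}$ with one $4\times 4$ Hermitian block $\bm J_e$ per edge, containing all four couplings ($p$, $\hat q$ versus phase and amplitude) of that edge. Positive semidefiniteness of each $\bm J_e$ is checked by a Schur complement, which yields \emph{simultaneously} the angle condition $|\varphi_n^\circ-\varphi_m^\circ|<\pi/2$ and the edge-wise bound $\alpha'_{nm}\ge V_m^\circ/(V_n^\circ\cos(\varphi_n^\circ-\varphi_m^\circ))-1$, whose aggregation gives \eqref{eq:alpha_bound_T_lines_definite}. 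This edge-wise structure also resolves, essentially for free, the technicalities you flagged but deferred: constant rank and semi-sectoriality along the indented contour (the $s$-dependence is only the common prefactor $1/s$), and the strictness bookkeeping at the sector boundary (node phases strictly inside $(-\pi/2,\pi/2)$, edge phases exactly $-\pi/2$, so the phase sums stay in $(-\pi,0)$). If you repair your step 3 by handling the cross blocks --- for instance by adopting such an edge-wise $4\times4$ decomposition --- the rest of your argument goes through.
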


\begin{proof}
    We provide the proof in appendix \ref{sec:Proof: Linear stability of power grids with $V$-$q$ droop}.
\end{proof}
We restrict our analysis to systems for which there is a choice of $\alpha_n$ that eliminates $V_n$ as a nodal state variable by absorbing it into $\hat q$. Otherwise, the first two conditions might fail for small $s$. The reason for this is that $V_n$ is a local state variable at the bus, while $\dot V_n$ appears as output. This is in contrast to $\varphi_n$, which does not appear \cite{kogler_normal_2022}. This mismatch makes the Hermitian part of the transfer function matrix non-definite for small $s$. Choosing $\alpha_n$ such that it eliminates $V_n$ as a nodal state variable makes $\Tn_n$ well-behaved. This can easily be achieved for many models of power grid actors \cite{schmietendorf_self-organized_2014,schiffer_conditions_2014} and notably also covers all systems analyzed in \cite{yang_distributed_2020}. The precise model class is discussed in more detail in appendix \ref{app:linear form of power grids with v-q droop}. From here on we assume that $\alpha_n$ is chosen in this way.
An alternative approach is to restrict the model class such that the transfer function matrix remains well behaved, e.g. by requiring $T_n^{\varrho p} = T_n^{\omega \hat q} = 0$. This alternative approach has been explored independently in depth in \cite{haberle_decentralized_2025}.

Our conditions align well with established practice in the design of grid-forming power grid actors. The diagonal terms $T_n^{\varrho \hat q}$ and $T_n^{\omega p}$ implement a stabilizing reaction of phase and amplitude to active and reactive power deviations, respectively.
Equations \eqref{eq:tr_T_nodes_definite}-\eqref{eq:det_T_nodes_positive} together imply that these transfer functions need to have negative real parts and dominate the dynamics.
In addition, \eqref{eq:det_T_nodes_positive} quantifies how large the crosstalks $T_n^{\omega \hat q}$ between reactive power and frequency, and $T_n^{\varrho p}$ between active power and voltage amplitude, may be, without endangering stability.

From the physics of the interconnection, we get a third condition: 
that the stabilization of the amplitude is sufficiently strong relative to the coupling on the network, as quantified in \eqref{eq:alpha_bound_T_lines_definite}.
This condition relates the nodal $V$-$q$ droop ratio $\alpha_n$ to local grid conditions. Note in particular that the lower bound in \eqref{eq:alpha_bound_T_lines_definite} can be negative, indicating that local grid conditions are so strong that even misconfigured droop relationships can be tolerated.

The remainder of this paper is structured as follows.
In Section \ref{sec:examples}, we derive what our main results imply in concrete systems and compare them with the results of \cite{yang_distributed_2020} and \cite{bottcher_stability_2023}. We then present numerical results for the IEEE 14-bus system in Section~\ref{sec:simulations}, which demonstrate that our conditions can be tight in this setting. Finally, we present the generalization to lossy grids in Section~\ref{sec:lossy lines} and provide a discussion and outlook in Section \ref{sec:discussion and conclusion}. The appendix includes the relevant mathematical definitions, derivations, and proofs.

\section{Concrete systems}
\label{sec:examples}

We will now demonstrate that the conditions of Proposition \ref{pro:main:power_grid} are viable to study the behavior of a wide range of typically considered grid models, and often can even improve on established theoretical considerations. We begin with generalized droop laws.

\subsection{Generalized droop}

The most general dynamical droop law relating voltage, frequency, active and reactive power is of the form:
\begin{align}
    \dot\varphi &= c_1 \Delta p + c_2 \Delta q + c_3 \Delta V,
    \\
    \dot V &= c_4 \Delta p + c_5 \Delta q + c_6 \Delta V.
\end{align}

Our assumption on exact droop behavior implies $c_6/c_5 = c_3/c_2 =: \alpha$, and we can reparametrize this as
\begin{align}
    \dot\varphi &= -C^\omega_{p}  \Delta p - C^\omega_q \Delta \hat q,
    \label{eq:second_order_droop_phi}
    \\
    \dot V &= V^\circ \cdot \left( -C^V_p \Delta p - C^V_{q} \Delta \hat q \right).
    \label{eq:second_order_droop_V}
\end{align}

This is also the most general form that the linearized equations of a grid forming device with exact $V$-$q$ droop can take when neglecting internal dynamics \cite{kogler_normal_2022}. The class of models considered in \cite{schiffer_conditions_2014} and \cite{yang_distributed_2020} Proposition 5 and 6 is a special case of the class studied in this section.

In this section, we discuss and contrast the theoretical results. Below, in Section \ref{sec:simulations}, we will show that our conditions are also remarkably exact in this model class.

The transfer matrix for \eqref{eq:second_order_droop_phi}, \eqref{eq:second_order_droop_V} is
\begin{align}
 \pmb T_n(s) = \begin{bmatrix} C^V_q &  C^V_p \\ C^\omega_q & C^\omega_p \end{bmatrix} 
\end{align}
and \eqref{eq:tr_T_nodes_definite}-\eqref{eq:det_T_nodes_positive} become
\begin{align}
        C^V_{q} + C^\omega_{p} & > 0
        \label{eq:tr_T_nodes_definite_real}
        \\
         C^V_{q} \cdot C^\omega_{p} &> \frac{1}{4}\left(C^V_{p} + C^\omega_{q}\right)^2.
        \label{eq:det_T_nodes_positive_real}
\end{align}

The well-established droop principles of controlling $\varphi_n$ with $-\Delta p_n$ and $V_n$ with $-\Delta q_n$ and $-\Delta V_n$ (see for example \cite{schiffer_survey_2016, schmietendorf_self-organized_2014}) are reflected in $T_n^{\omega p}>0$ and $T_n^{\varrho \hat q} > 0$.
Equations~\eqref{eq:tr_T_nodes_definite_real}-\eqref{eq:det_T_nodes_positive_real} tell us that these coefficients need to have the same sign and need to be positive.
Equation~\eqref{eq:det_T_nodes_positive_real} further quantifies that cross-coupling, reflected by $T_n^{\varrho p}$ and $T_n^{\omega \hat q}$, needs to be sufficiently small in comparison.

The case considered in \cite{yang_distributed_2020} Proposition 5 corresponds to $C^V_p = C^\omega_q = 0$. Then our stability conditions simplify to $C^V_{q} > 0 $ and $ C^\omega_{p} > 0$ together with the condition on $\alpha$. The conditions presented here improve upon those in Proposition 5 of \cite{yang_distributed_2020} for this model class. They require that $C^\omega_p$ and $\alpha_n$ are larger than a positive constant that depends on the entire network, and assume the signs of $C^V_{q}$ and $C^\omega_{p}$ from the outset. In contrast, we find no bound other than the `sign' on the $C$, and our lower bound for $\alpha_n$ is a local quantity that can even become negative. We will illustrate that this occurs in practical grid situations in the section on numerical experiments.

\subsection{Third-order models}

We now compare our results to established conditions in the widely studied case of second-order phase dynamics and voltage control.
For this purpose, we need a single internal variable $x_n$ that represents the phase velocity (angular frequency) relative to the nominal frequency.
For purposes of regularization, we further introduce a first-order feed-through term with coefficient $\delta_n$:
\begin{align}
    \dot\varphi_n &= x_n - \delta_n \Delta p_n\,\label{eq:Schiffer_phi_dot} ,\\
    \tau_{p_n} \dot x_n &= - D_n x_n - k_{p_n} \Delta p_n\, ,\\
    \tau_{q_n} \dot V_n &= - \Delta V_n  - k_{q_n} \Delta q_n\, .
\end{align}
At $\delta_n = 0$ we have pure second-order phase dynamics.
We adapted the notation of the droop-controlled inverter model of \cite{schiffer_conditions_2014}, which we recover at $\delta_n = 0$.
With $k_{q_n} = \alpha_n^{-1}$, the transfer matrix is given by
\begin{align}
    \Tn_n = \bmat{(V_n^\circ \alpha_n \tau_{q_n})^{-1} & 0 \\ 0 & \delta_n +\frac{k_{p_n}}{s \tau_{p_n}+D_n}}\, ,
\end{align}
assuming $\tau_{p_n}>0$ and $\tau_{q_n} > 0$.
A similar model is the third-order model for synchronous machines \cite{schmietendorf_self-organized_2014}, where the voltage dynamics are slightly different:
\begin{align}
    \tau_{V_n} \dot V_n = -\Delta V_n - X_n \Delta(q_n/V_n)\, ,
\end{align}
with transient reactance $X_n \ge 0$.
The transfer matrices of both models are identical via the invertible mapping
\begin{align}
    X_n &= V_n^\circ k_{q_n} \left(1 + 2 \frac{k_{q_n} q_n^\circ}{V_n^\circ}\right)^{-1}\, ,
    \\
    \tau_{V_n} &= \tau_{q_n} \left(1 + 2 \frac{k_{q_n} q_n^\circ}{V_n^\circ}\right)^{-1}\, .
\end{align}
This transfer matrix also represents the dynamics of virtual synchronous machines \cite{shuai_transient_2019}, quadratic droop control \cite{simpson-porco_voltage_2017}, reactive current control \cite{persis_bregman_2016}, and some controls with adaptive inertia \cite{fritzsch_stabilizing_2024} through similar mappings.

For the nodal transfer matrices to be stable as required by Proposition~\ref{pro:main:power_grid}, we need $D_n > 0$.
Conditions \eqref{eq:tr_T_nodes_definite}-\eqref{eq:det_T_nodes_positive} are fulfilled at all $s$ as long as $\delta_n > 0$, $k_{p_n} > -\delta_n D_n$ and $\alpha_n > 0$.

At $\delta_n = 0$ and $s=j\infty$, we have $T_n^{\omega p} = 0$ and violate \eqref{eq:det_T_nodes_positive}.
However, this is sufficient to establish semi-stability at $\delta_n = 0$, because
stability holds for arbitrarily small $\delta_n$ and the eigenvalues of the system's Jacobian are continuous functions of the parameters.
Furthermore, including gain information allows to treat this system at $\delta_n=0$, too \cite{kastendiek_phase_2025}.

In \cite{schiffer_conditions_2014}, stability conditions for this model were given in terms of matrix inequalities with a similar interpretation to our analysis: the diagonal couplings $T_n^{\varrho\hat q}$ and $T_n^{\omega p}$ need to be strong in the positive direction, while the off-diagonal cross-coupling need to be bounded relatively. We obtain a similar result, which, however, is decentralized and thus easier to analyze and implement.

In \cite{schiffer_conditions_2014}, it was also observed that decreasing $k_{q_n}$ can increase stability by weakening the cross-coupling. This is quantified in our lower bound for $\alpha_n = k_{q_n}^{-1}$ in \eqref{eq:alpha_bound_T_lines_definite}:
\begin{align}
    k_{q_n}^{-1} \ge  2 \sum_{m} \tilde Y_{nm}\frac{V_m^\circ}{\cos(\varphi_n^\circ - \varphi_m^\circ)}\, .
\end{align}
To our knowledge, this lower bound is entirely novel and has not previously been reported in the literature.
In \cite{bottcher_dynamic_2022,bottcher_stability_2023}, under the assumption that all nodes in the system are of the same functional form, a bound for $k_{q_n}$ was also derived. This bound can be tighter or looser than ours, depending on the operating points.

\section{Simulations}
\label{sec:simulations}
To test our stability conditions, we simulated the stability of the IEEE 14-bus system equipped with grid-forming inverters following the generalized droop control given in equations \eqref{eq:second_order_droop_phi}-\eqref{eq:second_order_droop_V}.

First, we tested the condition given in equation \eqref{eq:alpha_bound_T_lines_definite}. We stressed the grid by simulating imperfect reactive power provision. We choose reactive power values varying by a random factor of $\pm 0.3$ around the ideal reactive power per node, leading to grid states with moderate voltage variation. We then computed the sufficient bounds for $\alpha$ to be stable, $\alpha_n^\text{theory}$. Setting all inverters to $\alpha_n^\text{theory}$, we then systematically varied one inverter setting to find the critical value $\alpha_n^\text{crit}$ necessary for stability. If our conditions are overly conservative, we would expect that $\alpha_n^\text{crit}$ is smaller than $\alpha_n^\text{theory}$. Instead, we see in Figure~\ref{fig:alpha_test} that the theoretical prediction is almost perfect. We also observe that $\alpha_n$ can locally be negative. This demonstrates the power of our theoretical analysis to take into account local grid conditions in a far more sophisticated manner than previous analyses. In fact, in the system tested, our theoretical predictions only vary notably from the simulation results when $\alpha_n^\text{crit}$ is very close to zero. In this case, our prediction becomes slightly conservative (Figure~\ref{fig:alpha_test} inset).

\begin{figure}
    \centering
    \includegraphics[width=\linewidth]{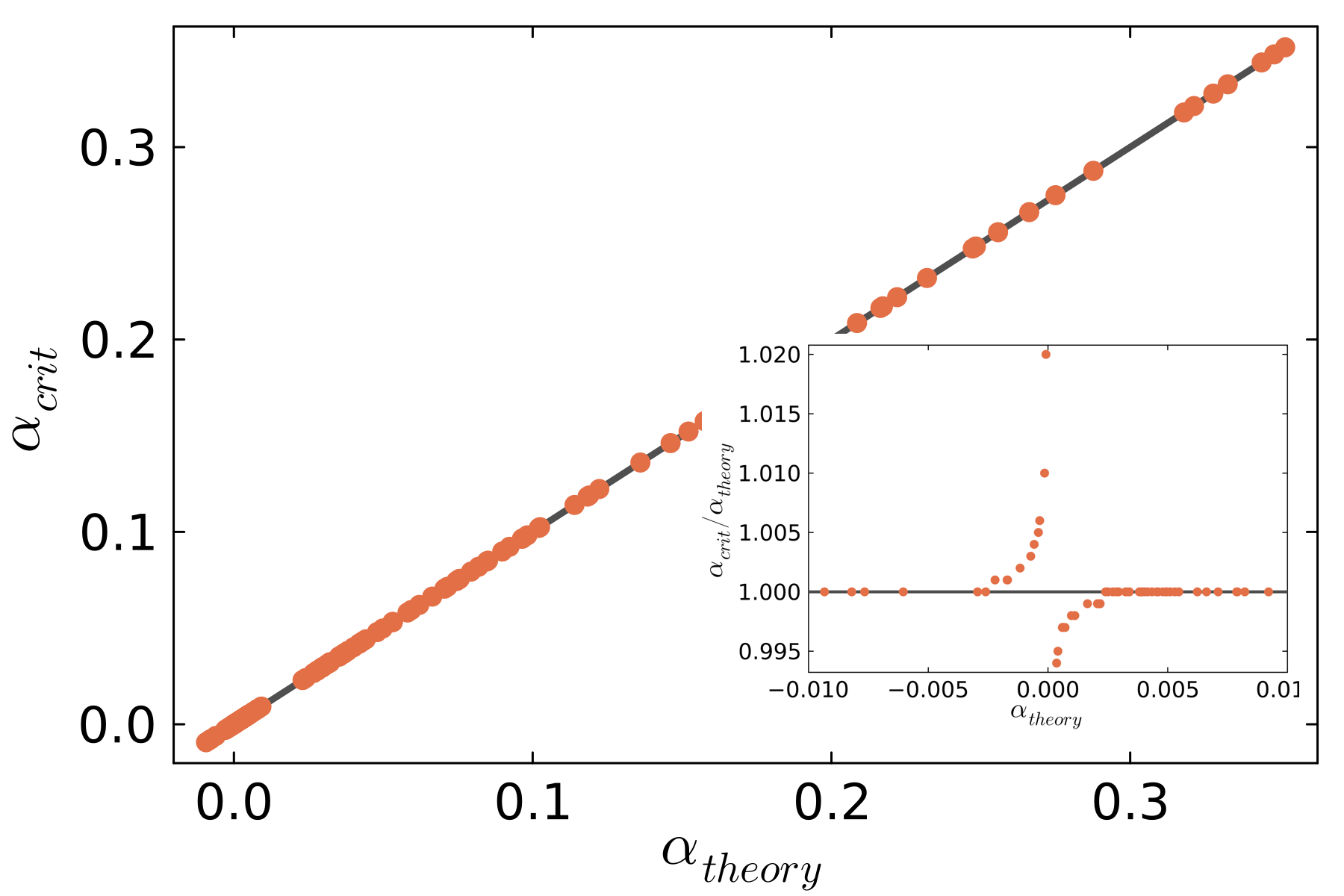}
    \caption{Numerical small-signal stability of the IEEE 14-bus system for $C_p^V = 1$, $C_q^V = 1$, $C_q^V = 0.5$, $C_p^\omega = 0.5$. The predicted $\alpha^\text{theory}$ exactly matches the numerically simulated stability threshold except when close to $0$ (inset). When $\alpha_n^\text{crit} < \alpha_n^\text{theory}$, we observe $\alpha_n^\text{crit} / \alpha_n^\text{theory} < 1$ for positive $\alpha$ and $\alpha_n^\text{crit} / \alpha_n^\text{theory} > 1$ for negative $\alpha$.}
    \label{fig:alpha_test}
\end{figure}

Figure~\ref{fig:trajectory} illustrates example trajectories for a stable system where all nodes are at the theoretical $\alpha$ value, and an unstable system where one node violates the theoretical stability guarantee. We observe that the violation leads to a slow voltage collapse within the system. As only one node violates our theoretical bound in this system, our bounds successfully pinpoint the origin of instability in this case.

\begin{figure}
    \centering
    \includegraphics[width=0.85\linewidth]{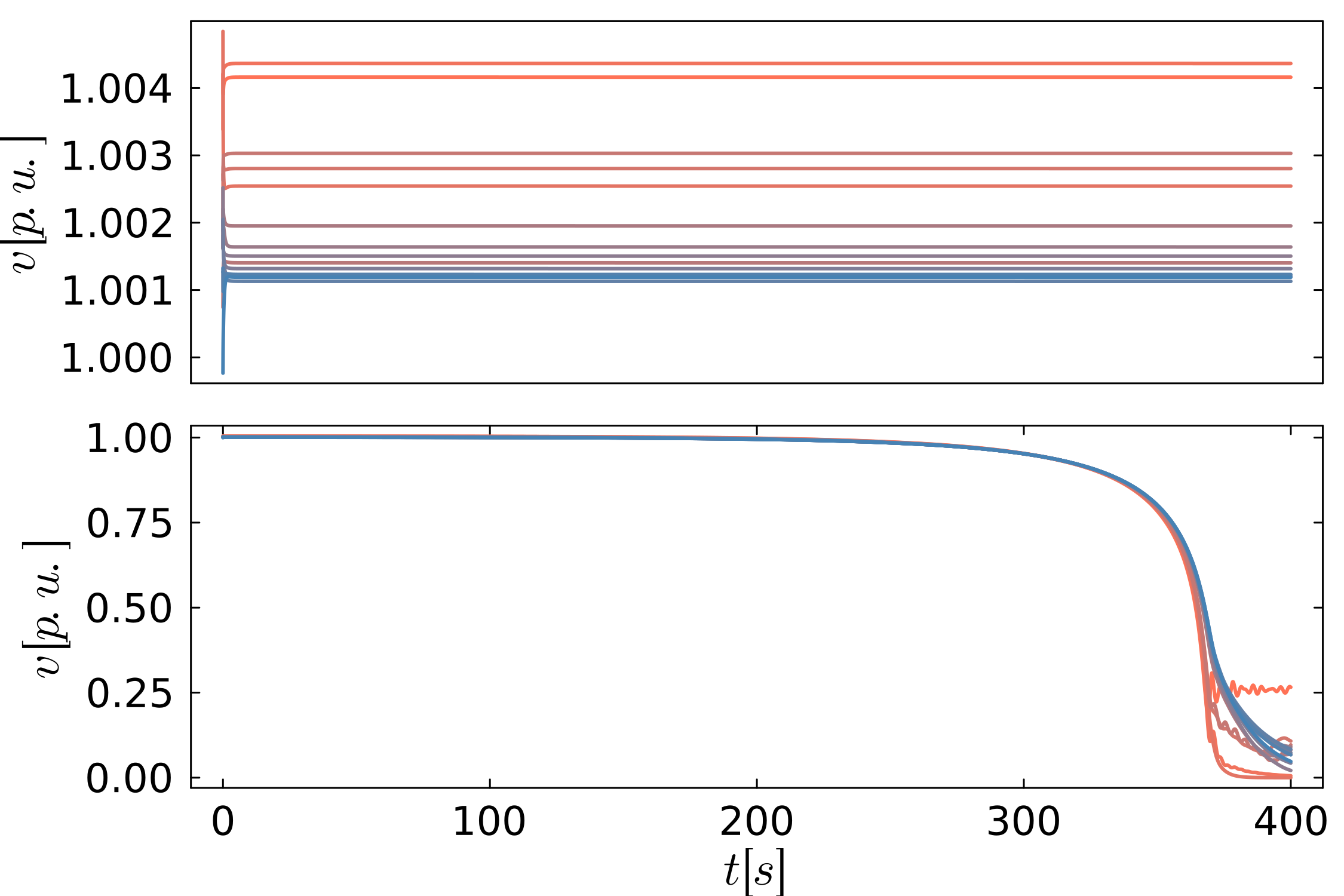}
    \caption{Trajectories for $\alpha_n = \alpha_n^\text{theory}$ (upper) and the case where $\alpha_1 < \alpha_n^\text{theory}$. Improper configured voltage droop at one node causes a slow voltage collapse.}
    \label{fig:trajectory}
\end{figure}

Finally, we also tested condition \eqref{eq:det_T_nodes_positive} for the same setup. We set $\alpha = \alpha^{\text{theory}}_n$ at each node, and varied the strength of the cross-coupling terms $C_p^V$ and $C_q^\omega$, which couple active power to voltage amplitude and reactive power to frequency, respectively. We observed that when the cross-couplings are of similar magnitude to the main couplings $C_q^V$ and $C_p^\omega$, which is to be expected for a well-tuned inverter, our stability conditions accurately capture the boundary of stability (Figure~\ref{fig:stability_matrix}). The conditions become conservative only when one cross-coupling term remains small while the other becomes large, which is an untypical control setting.

\begin{figure}
    \centering
    \includegraphics[width=\linewidth]{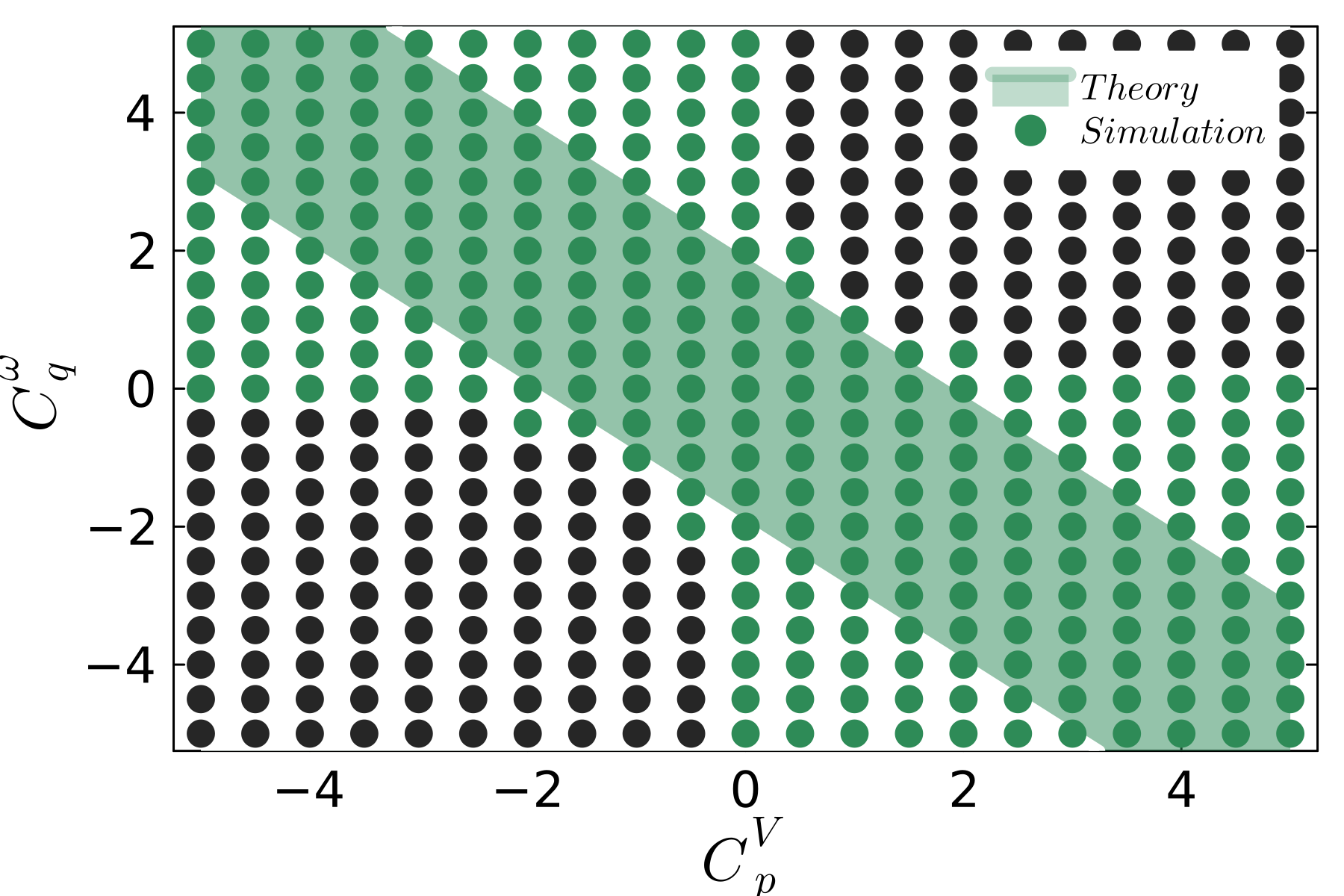}
    \caption{Numerical small-signal stability of the IEEE 14-bus system for $C_q^V = 1$, $C_p^\omega = 1$, and $\alpha^{\text{theory}}_n$. Green and black dots indicate numerical linear stability and instability, respectively, for each parameter configuration. The shaded area indicates our sufficient condition. In the case that $C_q^V = C_p^\omega$, our condition is exact.}
    \label{fig:stability_matrix}
\end{figure}

\section{Lossy lines}
\label{sec:lossy lines}
The principles of controlling $V$ with $\hat q$ and $\varphi$ with $p$, which are quantified by Proposition \ref{pro:main:power_grid}, are valid for lossless transmission lines.
In the presence of losses, similar principles hold with $\hat q$ and $p$ getting mixed depending on the ratio of resistance $R$ and reactance $X$.

Assuming constant $R/X$ ratio for all lines, we define $\tan\kappa:=R/X$.
The rescaled rotation matrix $\bm O(\kappa)$, and rotated transfer function matrix $\tilde{\bm T}_n$ are defined as
\begin{align}
    \bm O(R/X) &:= \bmat{1 & -R/X\\
    R/X & 1},
    \\
    \bm \Tn_n(s) &:= \tilde{\bm T}_n(s)\; \bm O(R/X).
\end{align}
Note that $\bm O \cos\kappa$ is a rotation matrix.
In the lossless case, we have $\bm O=\bm I$, the identity, and $\tilde{\bm \Tn}_n = \bm \Tn_n$.
The nodes now obey
\begin{align}
    \bmat{\varrho_n\\\omega_n} = - \tilde{\bm \Tn}_n(s) \left(\bmat{\Delta q_n\\ \Delta p_n}
    + \bm O \bmat{\alpha_n \Delta V_n\\0} \right).
\end{align}
This way, the conditions of Proposition \ref{pro:main:power_grid} for $\bm \Tn_n$ and $\alpha_n$ also hold for lossy grids, with an analogous proof, because the admittance can be rotated real for the analysis of the transmission lines' transfer matrix.

What does this parametrization mean in practice?
To interpret the conditions on $\bm \Tn_n$, consider that it can be seen as a transfer function from the lines' output
\begin{align}
    \bm O^{-1} \bmat{\Delta q_n\\ \Delta p_n} + \bmat{\alpha_n \Delta V_n\\0}
\end{align}
to $\bmat{\rho_n & \omega_n}^\top$.
This is a droop between $\hat q = q + \alpha V$ as before, and $\hat p = p + \alpha V R/X$ instead of just $p$, i.e., the control is adapted to the $R/X$ ratio. This mirrors the control design considered, for example, in \cite{he_passivity_2023}, where current and power are also rotated by the angle defined by $R/X$.

\section{Discussion and Conclusion}\label{sec:discussion and conclusion}
\label{discussion and conclusion}
In this paper, we derived fully decentralized small-signal stability conditions for power grids under the assumption of $V$-$q$ droop and homogeneous $R/X$ ratio for the lines. The preceding results provide a simple characterization of small-signal stability of heterogeneous grids in terms of transfer matrices between power mismatch on the input side, and frequency and voltage velocity on the output side. Such transfer function-based specifications are natural for the design and specification of decentralized power grid control strategies, and could potentially be directly encoded in grid codes \cite{haberle_dynamic_2023}. This is especially interesting as the transfer functions we are concerned with can be measured experimentally \cite{buttner_complex-phase_2024}.

The type of conditions derived here are robust in the sense that, if the numerical range of a nodal transfer matrix is bounded away from zero for all $s$ on the contour (see proof), a perturbation of the transfer matrix of $H_\infty$ norm smaller than the bound, can not make the system unstable. However, as we have to assume an exact droop relationship, this robustness does not yet easily extend to actual system parameters.

As the complex frequency approach \cite{milano_complex_2022} can also capture load models and grid-following control, as shown in \cite{moutevelis_taxonomy_2024}, we expect that our results can be adapted to load models. A starting point for an extension to line dynamics is given in \cite{haberle_decentralized_2025}. An alternative approach would be to use the observation in Appendix B of \cite{buttner_complex_2024} that line dynamics in the case of a homogeneous R/X ratio are essentially a low-pass filter on the nodal power flow that can be absorbed into the node dynamics.

The most significant challenge for our approach is to accurately account for non-droop-like reactions to voltage amplitude deviations. This also prevents us from directly applying the theory to conventional models in the presence of losses. Naively adding in additional voltage dynamics on the nodal side fails due to the sectoriality constraints. Similarly, models that do not have a pass-through like $\delta_n$ in \eqref{eq:Schiffer_phi_dot} fail our conditions at infinite imaginary $s$.
Lastly, dVOC \cite{seo_dispatchable_2019,gros_effect_2019} is covered by our theorem only in the unloaded case. To address these limitations, it will be necessary to accurately incorporate gain information into the stability analysis. The companion paper \cite{kastendiek_phase_2025}
explores this in the context of adaptive dynamical networks. We leave this extension of the methods introduced in this paper to future work.

\section*{Acknowledgments}
This work was supported by the OpPoDyn Project, Federal Ministry for Economic Affairs and Climate Action (FKZ:03EI1071A).

J.N. gratefully acknowledges support by \mbox{BIMoS} (TU Berlin), Studienstiftung des Deutschen Volkes, and the Berlin Mathematical School, funded by the Deutsche Forschungsgemeinschaft (DFG, German Research Foundation) Germany's Excellence Strategy –-- The Berlin Mathematics Research Center MATH+ (EXC-2046/1, project ID: 390685689).

R.D. was supported by the Swiss National Science Foundation under grant nr.~200021\_215336.

\section*{Conflict of interest}
None of the authors have a conflict of interest to disclose.

{\appendices

\section{Notational Preliminaries}
\label{sec:notation}

To prove the above result, we begin by expanding on the notation used above. We want to consider the small-signal stability of power grids with a heterogeneous mix of grid-forming actors. The $N$ nodes 
are indexed $n$ and $m$, $1 \leq n,m \leq N$.
The $E$ edges in the set of edges $\mathcal{E}$ are indexed by ordered pairs $e=(n,m)$, $n < m$. For any nodal quantity $x_n$, we denote the overall $N$-dimensional vector by $\bm{x}$. We write $[\bm x]$ for the diagonal matrix with $x_n$ on the diagonal: $[\bm x]_{nm} = \delta_{nm} x_n$, where $\delta_{nm} = 1 $if $n=m$, and $0$ else. In general, matrices are uppercase bold, e.g., $\bm A$, and vectors are lower case bold. We denote with $\bm 1$ the constant vector $1_n = 1$, so the identity matrix is $[\bm 1] =  \bm I$, and similarly for $\bm 0$ and $[\bm 0]$.

We denote the imaginary unit $j$, the complex conjugate of a quantity $z$ by $\zbar$, the transpose of a vector or matrix $\bm A$ as $\bm A^\intercal$ and the complex transpose by $\bm A^\dagger$.

We will often have two quantities per node, e.g., $z_n$ and $\zbar_n$. Stacking the vector of nodal quantities is written as
\begin{align}
\begin{bmatrix}
\bm z \\
\bm \zbar
\end{bmatrix}\, ,
\end{align}

We also will often be looking only at the components associated to a single node $n$ in such a stacked vector. To this end, we introduce the matrix $\bm P_n$ which selects these entries
\begin{align}
\bmat{z_n \\ \zbar_n} = \bm P_n \begin{bmatrix}
\bm z \\
\bm \zbar
\end{bmatrix}\, ,
\end{align}

and its transpose $\bm P_n^\dagger$. Note that $\bm P_n$ are isometries, and $\bm P_n^\dagger \bm P_n$ is an orthogonal projection matrix.

Given a set of nodewise matrices $\bm A_n$, the matrix built from them with the direct sum $\bigoplus$ then acts on our stacked vector as:
\begin{align}
\bigoplus_n \bm A_n \begin{bmatrix}
\bm z \\
\bm \zbar
\end{bmatrix} \coloneqq \sum_n \bm P_n^\dagger \bm A_n \bm P_n \begin{bmatrix}
\bm z \\
\bm \zbar
\end{bmatrix}\, ,
\end{align}

While the matrix representation of $\bigoplus_n \bm A_n$ is not block diagonal on the stacking $\bmat{\bm z & \bm \zbar}^\intercal$, it is block diagonal when stacking $\bmat{z_1 & \zbar_1 & z_2 & \zbar_2 & \dots z_n & \zbar_n}^\intercal$.

We also introduce the matrix $\bm P_e$ that selects the states related to the edge $e$ from our stacked vector:
\begin{align}
    \bm P_{e} \bmat{\bm z \\
\bm \zbar} = \bm P_{(n, m)} \bmat{\bm z \\
\bm \zbar} = \bmat{z_n \\ \zbar_n \\ z_m \\ \zbar_m}\, .
\label{eq:P-e on z zbar}
\end{align}
The $\bm P_{e}$ are isometries, but $\bm P_e^\dagger \bm P_{e}$ are not mutually orthogonal. Therefore, a matrix built from $4\times4$ matrices $\bm A_e$ as
\begin{align}
    \sum_e \bm P_e^\dagger \bm A_e \bm P_{e}\, ,
\end{align}
is not block diagonal. However, it can be written as the projection of a block diagonal matrix $\bigoplus_e \bm A_e$ and we write:
\begin{align}
    \sum_e \bm P_e^\dagger \bm A_e \bm P_{e} = \bm {B}_+^\dagger \bigoplus_e \bm A_e \bm{B_+}\, ,
\end{align}
for an according $4 E \times 2N $ matrix $\bm {B_+}$ that fulfills this equation.

\section{Phase stability preliminaries}
\label{sec:phase stability preliminaries}
Our results are based on the Generalized Small Phase Theorem of Chen \textit{et al.} \cite{chen_phase_2024}. We prove a straightforward proposition stating that if the transfer matrices of the system under consideration have a block structure, the global stability conditions can be decomposed into local conditions. An immediate application are networked systems that consist of node and edge variables that are coupled according to a graph.

Using this proposition we give a precise statement of the stability conditions for a power grid of general grid-forming grid actors with $V$-$q$ droop as introduced above.

For completeness, we begin by recalling the Small Phase Theorem of \cite{chen_phase_2024}, which provides conditions for the stability of the connected system ${\bm G} \# {\bm H}$, in terms of the \emph{numerical range} $W$ and the \emph{angular field of values} $W'$ \cite[Sec.~1.0, Def.~1.1.2]{horn_topics_1991}, \cite{wang_phases_2020,wang_phases_2023}, defined for a matrix ${\bm M}\in\C^{N\times N}$ as
\begin{align}
\label{eq:numerical range}
 W({\bm M}) &= \left\{{\bm z}^\dagger {\bm M}{\bm z} ~|~ {\bm z}\in\C^N\, ,~ {\bm z}^\dagger {\bm z} = 1\right\}\, , \\
 W'({\bm M}) &= \left\{{\bm z}^\dagger {\bm M}{\bm z} ~|~ {\bm z}\in\C^N\, ,~ {\bm z}^\dagger {\bm z}> 0\right\}\, .
\end{align}
When the numerical range lies in a half complex plane, we introduce the notion of \emph{sectoriality}. 
Assume that 0 is not in the interior of $W(\bm M)$. Define $\overline{\phi}({\bm M})$ and $\underline{\phi}({\bm M})$ as the maximum and minimum arguments of the elements of such a $W({\bm M})$, and $\delta({\bm M}) \coloneqq \overline{\phi}({\bm M}) - \underline{\phi}({\bm M})$. Then the matrix $\bm M$ is
\begin{itemize}
    \item \emph{semi-sectorial} if $\delta({\bm M}) \leq \pi$;
    \item \emph{quasi-sectorial} if $\delta({\bm M}) < \pi$;
    \item \emph{sectorial} if $0 \notin W(M)$.
\end{itemize}
Notice that a non-sectorial matrix ${\bm M}$ is semi-sectorial if $0$ is on the boundary of $W({\bm M})$.

Let ${\cal RH}_\infty^{m\times m}$ denote the set of $m\times m$ transfer matrices of real-rational proper stable systems. For these systems, all the poles of any ${\bm H}(s)\in \mathcal{RH}_\infty^{m\times m}$ (should there be any) are in the open left-hand side of the plane.
A system $\bm G\in {\cal RH}_\infty^{m\times m}$ is called frequency-wise sectorial if $\bm G(s)$ is sectorial for all $s\in j\R$. A system $\bm G(s)$ is semi-stable if its poles are in the closed left half plane. Take $j \Omega$ the set of poles on the imaginary axis, and $j \R \setminus j \Omega$ the indented imaginary axis with half-circles of radius $\epsilon \in\mathbb{R}$ around the poles and of radius $1/\epsilon$ around $\infty$ if it is a zero. These $\epsilon$-detours lie in the right half-plane. We call this indented imaginary axis ``the contour''. A system is semi-stable frequency-wise semi-sectorial if $\bm G(s)$ has constant rank along the contour and is semi-sectorial on $j \R \setminus j \Omega$.

The phase center is defined as $\gamma[\bm G(s)] \coloneqq \left\{ \overline{\phi}[\bm G(s)] + \underline{\phi}[\bm G(s)] \right\}/2$, and without loss of generality, we assume that $\gamma[\bm G(\epsilon^+)] \coloneqq \lim_{\epsilon\searrow 0}\gamma[\bm G(\epsilon)] = 0$.

We can now recall Chen {\it et al.}'s Small Phase Theorem.

\begin{theorem}[Generalized Small Phase Theorem, \cite{chen_phase_2024}]\label{thm:small-phase}
    Let ${\bm G}$ be semi-stable frequency-wise semi-sectorial with $j \Omega$ being the set of poles on the imaginary axis, and $\bm H\in{\cal RH}_\infty$ be frequency-wise sectorial. 
    Then ${\bm G}\# {\bm H}$ is stable if 
    \begin{align}
    \label{eq:small phase condition max phases}
        \sup_{s\in j[0,\infty]\setminus j\Omega}\left[\overline{\phi}({\bm G}(s)) + \overline{\phi}({\bm H}(s))\right] &< \pi\, , \\ 
        \label{eq:small phase condition min phases}
        \inf_{s\in j[0,\infty]\setminus j\Omega}\left[\underline{\phi}({\bm G}(s)) + \underline{\phi}({\bm H}(s))\right] &> -\pi\, .
    \end{align}
\end{theorem}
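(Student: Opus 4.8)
The plan is to prove the theorem as a phase-analogue of the generalized Nyquist criterion, replacing the small-gain boundedness estimate by a small-phase one. First I would reduce closed-loop stability of $\bm G \# \bm H$ to a winding-number condition on $\det\!\left(\bm I + \bm G(s)\bm H(s)\right)$ along the contour. Since $\bm G$ is semi-stable and $\bm H \in \mathcal{RH}_\infty$ is stable, the open-loop product $\bm G \bm H$ has no poles in the open right half-plane, and the indented contour sweeps the closed right half-plane while detouring into it around the imaginary-axis poles $j\Omega$ of $\bm G$. By the argument principle, $\bm G \# \bm H$ is stable precisely when $\det(\bm I + \bm G \bm H)$ is nonzero everywhere on the contour and its image winds zero times around the origin.

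The heart of the argument is a \emph{phase-of-product} lemma: for sectorial matrices $\bm A,\bm B$ one has $\overline{\phi}(\bm A \bm B) \le \overline{\phi}(\bm A) + \overline{\phi}(\bm B)$ and $\underline{\phi}(\bm A \bm B) \ge \underline{\phi}(\bm A) + \underline{\phi}(\bm B)$. I would establish this through the sectorial decomposition $\bm A = \bm T_A^\dagger \bm\Phi_A \bm T_A$ with $\bm\Phi_A$ diagonal of unit-modulus phase factors, together with the phase-majorization results developed in the matrix-phase literature \cite{wang_phases_2020, wang_phases_2023}. Granting this lemma, conditions \eqref{eq:small phase condition max phases}-\eqref{eq:small phase condition min phases} force every eigenvalue argument of $\bm G(s)\bm H(s)$ to lie strictly inside $(-\pi,\pi)$ for all $s$ on the contour, so $-1$ is never an eigenvalue and $\det(\bm I + \bm G(s)\bm H(s)) \neq 0$ there.

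To pin down the winding number I would deform $\bm G \mapsto \tau \bm G$ for $\tau \in [0,1]$. Multiplication by the positive scalar $\tau$ leaves all phases unchanged, so the phase bounds — and hence the nonvanishing of $\det(\bm I + \tau \bm G \bm H)$ on the contour — persist for every $\tau \in (0,1]$, while at $\tau = 0$ the determinant equals $1$. Homotopy invariance of the winding number then forces it to equal its value at $\tau = 0$, namely zero; combined with the absence of open-loop right-half-plane poles, this yields stability via the generalized Nyquist criterion.

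The main obstacle is twofold, and both parts live at the boundary. First, the phase-of-product lemma is genuinely nontrivial because matrix multiplication does not add arguments, so controlling $\overline{\phi}(\bm A\bm B)$ needs the congruence and majorization machinery rather than an elementary estimate; moreover it must be pushed from sectorial to \emph{semi}-sectorial matrices, where $\delta(\bm M) = \pi$ and the numerical range touches a supporting line through the origin. Second, the indentation around $j\Omega$ requires care: I would argue that on the $\epsilon$-detours, which lie in the open right half-plane, $\bm G$ is strictly sectorial even when it is only semi-sectorial on $j\R \setminus j\Omega$, and that the strict phase inequalities survive the limit $\epsilon \searrow 0$ thanks to the assumed constant rank of $\bm G$ along the contour. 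Making this limiting argument rigorous — so that the strict bounds $<\pi$ and $>-\pi$ are preserved and no zero of the determinant can slip onto the contour in the limit — is the delicate step.
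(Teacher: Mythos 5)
The paper contains no proof of this statement: it is imported verbatim from \cite{chen_phase_2024}, and the proof environment simply defers there. So the comparison is with the original reference, and at the level of strategy your reconstruction is the right one — reduce closed-loop stability to a winding-number condition on $\det\left(\bm I + \bm G(s)\bm H(s)\right)$ along the indented contour, exclude $-1$ from the spectrum of $\bm G(s)\bm H(s)$ via matrix-phase bounds, and pin the winding number to zero with the homotopy $\tau \bm G$, $\tau \in [0,1]$, which leaves all phases invariant. One imprecision is easily repaired: the ``phase-of-product'' inequality $\overline{\phi}(\bm A \bm B) \le \overline{\phi}(\bm A) + \overline{\phi}(\bm B)$ is not well posed, because the product of two sectorial matrices need not be sectorial, so $\overline{\phi}(\bm A\bm B)$ may be undefined. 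What the matrix-phase literature \cite{wang_phases_2020,wang_phases_2023} actually provides, and what your argument actually uses two sentences later, is the eigenvalue statement: the arguments of the eigenvalues of $\bm A\bm B$ lie in $\left[\underline{\phi}(\bm A)+\underline{\phi}(\bm B),\,\overline{\phi}(\bm A)+\overline{\phi}(\bm B)\right]$, which suffices to keep $-1$ out of the spectrum.

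The genuine gap is in your handling of the $\epsilon$-detours. You propose that $\bm G$, being only semi-sectorial on $j\R \setminus j\Omega$, becomes \emph{strictly} sectorial on the detours because they lie in the open right half-plane. That is false in general, and false in exactly the case this paper cares about: take $\bm G(s) = \bm J / s$ with $\bm J$ Hermitian, positive semi-definite and \emph{singular} — the situation of the network response here, where $\bm J^{\rm net}$ has the zero mode $\bmat{\bm 1 & -\bm 1}^\intercal$. On the detour $s = \epsilon e^{j\vartheta}$, $|\vartheta| \le \pi/2$, one gets $\bm G = \epsilon^{-1} e^{-j\vartheta} \bm J$, whose numerical range is a segment with endpoint at the origin; since $0 \in W(\bm G)$, the matrix is semi-sectorial but never sectorial anywhere on the detour. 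So the strictness you need cannot come from the detour geometry. This is precisely where the constant-rank-along-the-contour hypothesis (together with the normalization $\gamma[\bm G(\epsilon^+)] = 0$ fixing the branch of the phases) carries the load: constancy of rank prevents eigenvalue branches of $\bm G \bm H$ from emerging from or collapsing into $0$ and sliding onto $-1$ as $\epsilon \searrow 0$, which is what makes the strict bounds survive the limit. In your sketch the rank assumption appears only as a vague safety net, while the step you yourself single out as delicate cannot be closed without it; as written, the limiting argument would already fail on the paper's own $\Tnet$.
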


\begin{proof}
    See \cite{chen_phase_2024}
\end{proof}

If the system $\bm G \#\bm H$ has a block structure, e.g., a networked distributed power system, we can show the following:

\begin{pro}[Generalized Small Phase Theorem with Block Structure]
\label{thm:small_phase_theorem_block}
    Consider the system $\bm G\# \bm H$ with the block structure $\bm H =  \bigoplus_{n}\Tn_n(s) $ and $\bm G = {\bm B}^\dagger\bigoplus_{e}\Te_e(s){\bm B}$ for some $\bm B$ of appropriate dimensions.
    For each $n$, let $\Tn_n(s)\in{\cal RH}_\infty$ be frequency-wise sectorial. 
    For each $e$, let $\Te_e(s)$ be semi-stable frequency-wise semi-sectorial individually and along the indented imaginary axis avoiding the poles of all $\Te_e (s)$ for indents smaller than some finite $\epsilon^*$. Write $j \Omega$ for the union of the set of poles on the imaginary axis. 
    Assume that $\bm G(s)$ has constant rank along the contour.
    Then, the interconnected system $\bm G \# \bm H$ is stable if 
    \begin{align}
    \label{eq:small phase theorem, block structure version, sectoriality condition H}
        \max_n\overline{\phi}\left(\Tn_n(s)\right) - \min_n\underline{\phi}\left(\Tn_n(s)\right) &< \pi\, , 
\end{align}
for all $s\in j[0,\infty]$, and
\begin{align}
        \max_e\overline{\phi}\left(\Te_e(s)\right) - \min_e\underline{\phi}\left(\Te_e(s)\right) &\leq \pi\, ,
        \label{eq:small phase theorem, block structure version, semi-sectoriality condition G}
    \end{align}
    for all $s\notin j\Omega$, and 
    \begin{align}
    \label{eq:small phase theorem, block structure version, stability condition sup}
        \sup_{n,e,s\notin j\Omega}\left[\overline{\phi}\left(\Tn_n(s)\right) + \overline{\phi}\left(\Te_e(s)\right)\right] &< \pi\, , \\ 
        \inf_{n,e,s\notin j\Omega}\left[\underline{\phi}\left(\Tn_n(s)\right) + \underline{\phi}\left(\Te_e(s)\right)\right] &> -\pi\, .
        \label{eq:small phase theorem, block structure version, stability condition inf}
    \end{align}
\end{pro}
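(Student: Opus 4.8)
The plan is to derive Proposition~\ref{thm:small_phase_theorem_block} from the Generalized Small Phase Theorem (Theorem~\ref{thm:small-phase}) by verifying its two hypotheses---that $\bm H$ is frequency-wise sectorial and $\bm G$ is semi-stable frequency-wise semi-sectorial---from the corresponding blockwise hypotheses, and then showing that the blockwise phase conditions \eqref{eq:small phase theorem, block structure version, stability condition sup}--\eqref{eq:small phase theorem, block structure version, stability condition inf} imply the aggregate phase conditions \eqref{eq:small phase condition max phases}--\eqref{eq:small phase condition min phases}. The crux in all three steps is a lemma relating the numerical range of a direct sum (and of a congruence $\bm B^\dagger(\cdot)\bm B$) to the numerical ranges of its constituent blocks.

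\emph{First I would} establish that for a block-diagonal matrix $\bm M = \bigoplus_k \bm M_k$ one has $W(\bm M) = \operatorname{conv}\bigl(\bigcup_k W(\bm M_k)\bigr)$, the convex hull of the union. This is the key geometric fact: writing $\bm z = \sum_k \bm P_k^\dagger \bm z_k$ with $\sum_k \|\bm z_k\|^2 = 1$, the quadratic form $\bm z^\dagger \bm M \bm z = \sum_k \bm z_k^\dagger \bm M_k \bm z_k$ is a convex combination (with weights $\|\bm z_k\|^2$) of points in the individual $W(\bm M_k)$. From this, the maximum and minimum arguments of $W(\bm M)$ are exactly $\max_k \overline\phi(\bm M_k)$ and $\min_k \underline\phi(\bm M_k)$, provided $0$ is not interior to $W(\bm M)$. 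Applying this to $\bm H = \bigoplus_n \Tn_n(s)$ shows that sectoriality of $\bm H$ at a given $s$ is equivalent to \eqref{eq:small phase theorem, block structure version, sectoriality condition H} together with each $\Tn_n$ being sectorial, and that $\overline\phi(\bm H(s)) = \max_n \overline\phi(\Tn_n(s))$, $\underline\phi(\bm H(s)) = \min_n \underline\phi(\Tn_n(s))$.

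\emph{Next I would} handle $\bm G = \bm B^\dagger\bigl(\bigoplus_e \Te_e(s)\bigr)\bm B$. Since congruence by $\bm B$ sends $\bm z \mapsto \bm B\bm z$, the angular field of values satisfies $W'(\bm G(s)) \subseteq W'(\bigoplus_e \Te_e(s))$, so the phase spread of $\bm G$ is bounded by that of the direct sum, giving \eqref{eq:small phase theorem, block structure version, semi-sectoriality condition G} $\Rightarrow$ semi-sectoriality of $\bm G$, with $\overline\phi(\bm G(s)) \le \max_e \overline\phi(\Te_e(s))$ and $\underline\phi(\bm G(s)) \ge \min_e \underline\phi(\Te_e(s))$. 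The semi-stability and constant-rank-along-the-contour hypotheses on $\bm G$ follow directly from the assumed blockwise semi-stability, the shared indented contour avoiding $j\Omega$, and the explicitly assumed constant rank of $\bm G$; I would note that the pole set $j\Omega$ of $\bm G$ is contained in the union of the edge pole sets, and the $\epsilon^*$ bound guarantees a single admissible contour for all edges simultaneously.

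\emph{Finally}, combining these phase bounds, the aggregate conditions become $\overline\phi(\bm G(s)) + \overline\phi(\bm H(s)) \le \max_e\overline\phi(\Te_e(s)) + \max_n\overline\phi(\Tn_n(s)) \le \sup_{n,e}[\overline\phi(\Tn_n) + \overline\phi(\Te_e)] < \pi$ by \eqref{eq:small phase theorem, block structure version, stability condition sup}, and symmetrically for the infimum via \eqref{eq:small phase theorem, block structure version, stability condition inf}; invoking Theorem~\ref{thm:small-phase} then yields stability of $\bm G \# \bm H$. \textbf{The main obstacle} I anticipate is the careful treatment of the boundary and degenerate cases in the numerical-range lemma---specifically, ensuring the arguments $\overline\phi$, $\underline\phi$ are well-defined (i.e.\ $0\notin\operatorname{int}W$) when passing from blocks to the direct sum, and verifying that the congruence $\bm B^\dagger(\cdot)\bm B$ cannot \emph{enlarge} the phase spread even when $\bm B$ is rank-deficient or the edge blocks are only semi-sectorial (phase spread exactly $\pi$), so that the transition from strict inequalities at the nodes to the non-strict one at the edges is handled consistently with the hypotheses of Theorem~\ref{thm:small-phase}.
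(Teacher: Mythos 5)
Your proposal is correct and follows essentially the same route as the paper's own proof: the convex-hull property of the numerical range for direct sums, the subset property of the angular field of values under congruence by $\bm B^\dagger(\cdot)\bm B$ (the paper notes this can add $0$, consistent with your rank-deficiency caveat), blockwise-to-aggregate phase bounds for $\bm H$ and $\bm G$, and then invocation of Theorem~\ref{thm:small-phase}. The only piece the paper spells out that you leave implicit is the meromorphic-rank argument used to keep the blocks semi-sectorial on a common indented contour, but since the proposition assumes semi-sectoriality for all indents below $\epsilon^*$ and constant rank of $\bm G$ along the contour, your treatment is adequate.
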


\textit{Remark:}
$\bm H$ is stable, and its sectoriality is ensured by \eqref{eq:small phase theorem, block structure version, sectoriality condition H}. $\bm G$ is semi-stable, and its semi-sectoriality is ensured by \eqref{eq:small phase theorem, block structure version, semi-sectoriality condition G} and the rank condition.
Equations~\eqref{eq:small phase theorem, block structure version, stability condition sup}-\eqref{eq:small phase theorem, block structure version, stability condition inf} imply the stability condition of Theorem~\ref{thm:small-phase}.

\begin{proof}
    We provide the proof in Appendix \ref{sec:proof_of_phase_theorem, block version}.
\end{proof}

\section{Linear form of power grids with \textnormal{$V$-$q$} droop}
\label{app:linear form of power grids with v-q droop}

To make use of Proposition \ref{thm:small_phase_theorem_block} we have to linearize the power grid model under investigation into an appropriate form.
In this section, we show that the power grid can be represented as an interconnected feedback system of two transfer matrices: $\Tnod \# \Tnet$.
$\Tnod$ includes all nodal transfer matrices from $\hat q_n$ and $p_n$ to $\varrho_n$ and $\omega_n$, as in \eqref{eq:definition_entries_T_n}.
$\Tnet$ represents the network structure and the physics of the coupling, as it takes $\bm \varrho$ and $\bm\omega$ as inputs and provides $\bm{\hat q}$ and $\bm p$ as outputs.
The fundamental assumption we make is that the nodes can be modeled as voltage sources that react to conditions in the grid. This assumption is most natural in the context of grid-forming actors, such as power plants or grid-forming inverters.

\subsection{Complex frequency notation}

As noted above, every node has a complex voltage (representing a balanced three-phase voltage) $v_n = v_{d,n} + j v_{q,n}$:
\begin{align}
    v_n(t) &= V_n(t) e^{j \varphi_n(t)} = e^{\theta_n(t)} \, ,
    \label{eq:voltage_phasor}
\end{align}

and a complex current $\imath_n$. The latter is given in terms of the former through the admittance matrix $\bm Y$:
\begin{align}
\bm \imath(t) = \bm Y \cdot \bm v(t) = - j \bm L \cdot \bm v(t)\, .
\end{align}

The matrix $\bm L := je^{-j\kappa} \bm Y \in \mathbb{R}^{N\times N}$ is a real, symmetric, positive definite Laplacian.
We show the proof for lossless grids, where $\kappa = 0$. The lossy case goes analogously with a rotation, see Section~\ref{sec:lossy lines}.

We use a power-invariant transformation from $ABC$ coordinates, so that the apparent power is given by $S_n(t) = v_n(t) \ibar_n(t) = p_n(t) + j q_n(t)$ with active power $p_n(t)$ and reactive power $q_n(t)$.

Milano \cite{milano_complex_2022} suggests writing the nodal dynamics through the time derivative of the complex phase $\theta_n$, the complex frequency $\eta$:
\begin{align}
	\eta_n(t) &= \dot \theta_n(t)\, , \\
    \dot v_n(t) &= \eta_n(t) v_n(t)\\
    &= (\varrho_n(t) + j \omega_n(t)) v_n(t)\, .
\end{align}

We will drop the explicit time dependence $(t)$ from now on. By considering both, the complex equation and the complex conjugate equation,
\begin{align}
    \dot v_n &= \eta_n v_n\, ,\\
    \dot \vbar_n &= \etabar_n \vbar_n\, ,
\end{align}

we can switch back and forth between complex and real picture, using a linear transformation. The velocities $\varrho_n$, $\omega_n$, $\eta_n$ and $\etabar_n$ are related by:
\begin{align}
\bmat{\eta_n \\ \etabar_n} &= \bmat{1 & j \\ 1 & - j} \bmat{\varrho_n \\ \omega_n} = \bm U \bmat{\varrho_n \\ \omega_n}\, , \\
\bmat{\varrho_n \\ \omega_n} &= \frac12 \bmat{1 & 1 \\ -j & j} \bmat{\eta_n \\ \etabar_n} = \frac12 \bm U^\dagger \bmat{\eta_n \\ \etabar_n}\, ,
\end{align}

Note that $\bm U^{-1} = \frac{1}{2} \bm U^\dagger$, thus $\bm U/\sqrt{2}$ is a unitary matrix. This means that under $\bm U$ as coordinate transformation, all pertinent properties of linear dynamical systems are retained.

\subsection{A system of grid-forming actors}\label{sec:grid-forming actors}

We are interested in conditions that guarantee small-signal stability of a heterogeneous system of grid-forming actors, without strong assumptions on their internal structure. As noted above, we assume that we can model the nodes as voltages reacting to the grid state.  We assume that the voltages react in a smooth, differentiable manner, and that $V_n > 0$. Thus, $\omega_n$ and $\varrho_n$ are defined, and can be chosen as the nodal output variable. Using $p_n$ and $q_n$ as the input that the nodal actor sees from the grid, we can write the general form of a node's behavior in terms of three functions $r_n$, $o_n$ and ${\bm f}^{\bm x}_n$:
\begin{align}
     \label{eq:node no assumptions 1}
    \varrho_n &= r_n(\varphi_n, V_n, p_n, q_n, \bm x_n)\, , \\
    \omega_n &= o_n(\varphi_n, V_n, p_n, q_n, \bm x_n)\, , \\
	\dot {\bm x}_n &= {\bm f}^{\bm x}_n(\varphi_n, V_n, p_n, q_n, \bm x_n)\, .
  \label{eq:node no assumptions 3}
\end{align}
Here, $\bm x_n\in \mathbb{R}^{n_\text{var}}$ are internal states of dimension $n_\text{var}$ that reflect the inner workings of the grid actor, and are not visible directly in the output $v$. Examples include generator frequencies, inner-loop DC voltages, or the $d$- and $q$-components of internal AC quantities.

We make two assumptions on the form of the functions $r_n$, $o_n$ and ${\bm f}^{\bm x}_n$: I) Following \cite{kogler_normal_2022}, we assume that the nodal dynamics does not explicitly depend on $\varphi_n$. This assumption is 
justified by symmetry considerations and the desire to not introduce harmonic disturbances into the grid. II) We assume that the reaction to a deviation in the voltage mirrors that of a deviation in the reactive power. That is, we assume that near the operation point, $r_n$, $o_n$ and ${\bm f}^{\bm x}_n$ only depend on $\hat q_n = q_n + \alpha_n V_n$ for some real $\alpha_n$ rather than on both $q_n$ and $V_n$ separately. With these assumptions we have:
\begin{align}
     \label{eq:total system in normal form 1}
    \varrho_n &= r_n(p_n, \hat q_n, \bm x_n)\, , \\
    \omega_n &= o_n(p_n, \hat q_n, \bm x_n)\, , \\
	\dot {\bm x}_n &= {\bm f}^{\bm x}_n(p_n, \hat q_n, \bm x_n)\, .
 \label{eq:total system in normal form 3}
\end{align}

\subsection{The linearized nodal response}

We define the coefficients of the Jacobian as 
\begin{align}
J_n^{\omega p} &\coloneqq \frac{\partial o_n}{\partial p_n}\, , &
J_n^{\varrho \hat q} &= \frac{\partial r_n}{\partial (\hat q_n)}\, , &
\bm J_n^{x x} &= \frac{\partial \bm f_n^x}{\partial \bm x_n}\, , & \text{etc.}
\end{align}

We now want to look at the linear response of the nodal subsystem around an operating point $v_n^\circ$ , $i_n^\circ$. We assume that the operating point satisfies $\varrho_n^\circ = \omega_n^\circ = \dot {\bm x}_n = 0$. Write $\Delta p_n = p_n - p_n^\circ$ and $\Delta \hat q_n = q_n - q_n^\circ + \alpha_n(V_n - V_n^\circ)$ and assume that ${\bm x}_n^\circ = \bm 0$.
The linearized nodal dynamics are then
\begin{align}
    \dot {\bm x}_n &= \bm J_n^{x p} \Delta p_n + \bm J_n^{x q} \Delta \hat q_n + \bm J_n^{x x} \bm x_n\, , \\
    \varrho_n &= J_n^{\varrho p} \Delta p_n + J_n^{\varrho \hat q} \Delta \hat q_n + \bm J_n^{\varrho x} \bm x_n\, , \\
    \omega_n &= J_n^{\omega p} \Delta p_n + J_n^{\omega \hat q} \Delta \hat q_n + \bm J_n^{\omega x} \bm x_n\, .
\end{align}

which we stack as 
\begin{align}
    \dot {\bm x}_n &= \bm J_n^{x q p} \bmat{\Delta \hat q_n \\ \Delta p_n}  + \bm J_n^{x x} \bm x_n\, , \\
    \bmat{\varrho_n \\ \omega_n} &= \bm J_n^{\varrho \omega \hat q p}\bmat{\Delta \hat q_n \\ \Delta p_n} + \bm J_n^{\varrho \omega x} \bm x_n\, .
\end{align}

The nodal transfer matrix from $\bmat{\Delta \hat q_n & \Delta p_n}^\intercal$ to $\bmat{\varrho_n & \omega_n}^\intercal$ is then just
\begin{align}
    -\Tn_n(s) &= \bm J_n^{\varrho \omega \hat q p} + \bm J_n^{\varrho \omega x} (s - \bm J_n^{x x})^{-1} \bm J_n^{x q p}\, .
    \label{eq:T_n}
\end{align}

We can summarize the transfer matrices of all nodes in $\Tnod$ such that
\begin{align}\label{eq:nodal-response}
   \bmat{\bm \varrho \\ \bm \omega} &= \Tnod \bmat{\Delta \bm{\hat q}\\ \Delta \bm p} \coloneqq \bigoplus_n \Tn_n(s) \bmat{\Delta \bm{\hat q} \\ \Delta\bm p}.
\end{align}

\subsection{The linearized network response}

To obtain the full linearized equations, we need the response of $\Delta p_n$ and $\Delta \hat q_n$ to variations in the complex angle $\theta_n$ around a given power flow with $\theta_n^\circ$.

This is most easily given in terms of a variant of the complex power and the complex couplings introduced by \cite{buttner_complex_2024}. We define
\begin{align}
\sigma_n &\coloneqq q_n + j p_n\, ,
\end{align}
to mirror the definition of the complex frequency \cite{milano_complex_2022}. In terms of the usual complex power, this is $\sigma_n = j\overline{S}_n$. This complex power can be expressed in terms of the Hermitian matrix $\bm\Kappa\in\mathbb{C}^{N\times N}$ of complex couplings \cite{milano_complex_2022,buttner_complex_2024}:
\begin{align}
\Kappa_{nm} &= \vbar_n L_{nm} v_m\, , \\
\sigma_n &= \sum_m \Kappa_{nm}\, . \label{eq:sum K is sigma}
\end{align}

These quantities have a very simple derivative with respect to the complex phases of the system:
\begin{align}
\frac{\partial \Kappa_{nm}}{\partial \theta_h} &= \delta_{hm} \Kappa_{nm}\, ,\quad
&\frac{\partial \Kappa_{nm}}{ \partial\thetabar_h} &= \delta_{hn}  \Kappa_{nm}\, ,\\
\frac{\partial \sigma_n}{\partial \theta_h} &= \Kappa_{nh}\, ,
&\frac{\partial \sigma_n}{\partial \thetabar_h} &= \delta_{nh} \sigma_n\, . 
\end{align}

The linearization of $\sigma_n$ around an operating state of the system with complex couplings $\Kappa^\circ_{nm}$ and complex power $\sigma^\circ_n$ is then given by
\begin{align}
\sigma_n \approx \sigma^\circ_n + \sigma_n^\circ \Delta \thetabar_n + \sum_m \Kappa^\circ_{nm} \Delta \theta_m\, 
\end{align}

or, in vector notation, 
\begin{align}
\bmat{\Delta \bm \sigma \\ \Delta \bm \sigmabar } \approx \bmat{\bm \Kappa^\circ & [\bm \sigma^\circ] \\ [\bm \sigmabar^\circ] & \bm \Kappabar^\circ} \bmat{\Delta \bm \theta \\ \Delta \bm \thetabar}\, .
\end{align}

As the nodal dynamics depend on $\Delta \hat q_n $ and $\Delta p_n$, as inputs, we now consider 
\begin{align}
\Delta \sigma_n + \alpha_n \Delta V_n = \Delta \hat q_{n} + j \Delta p_n\, ,
\end{align}

for the output of the edge dynamics.
Together with $\Delta V_n \approx V_n^\circ \frac12 ( \Delta \theta + \Delta \thetabar)$, we obtain
\begin{align}
&\bmat{\Delta \bm \sigma + \bm \alpha \Delta \bm V \\ \Delta \bm \sigmabar + \bm \alpha \Delta \bm V}
\approx \bm J^\text{net} \bmat{\Delta \bm \theta \\ \Delta \bm \thetabar}\, ,
\end{align}
with the transfer matrix
\begin{align}
\bm J^\text{net} \coloneqq&
\bmat{\bm \Kappa^\circ + \frac12 [\bm \alpha ][\bm V^\circ] & [\bm \sigma^\circ] + \frac12 [\bm \alpha ][\bm V^\circ] \\ [\bm \sigmabar^\circ] + \frac12 [\bm \alpha ][\bm V^\circ] & \bm \Kappabar^\circ + \frac12 [\bm \alpha ][\bm V^\circ] }\, .
\end{align}

Note that as $\bm K^\circ$ is Hermitian, and so is $\bm J^\text{net}$. Further, we see from \eqref{eq:sum K is sigma} that $\bmat{\bm 1 & - \bm 1}^\intercal$ is a zero mode of the network response $\bm J^\text{net}$.

At this point, we can see the necessity of incorporating the $V$-$q$ droop into the network response. Without the presence of the $\alpha_n$, $\bm J^\text{net}$ would be indefinite and thus not amenable to sectorial analysis.

\subsection{The full system}

Above we derived the nodal transfer matrix from $p_n$, $q_n + \alpha_n V_n$ to $\varrho_n$ and $\omega_n$, and the network response from $\theta_n$ and $\thetabar_n$ to $\sigma_n + \alpha_n V_n$ and $\sigmabar_n + \alpha_n V_n$. We can now combine these into the full system equations.
Recall that 
\begin{align}
\Delta \dot \theta_n &= \eta_n\, , \\
s \Delta \theta_n &= \eta_n\, ,
\end{align}
where the latter equation is in Laplace space.
Let us introduce $\bm{\tilde U} \in\mathbb{C}^{2N\times2N}$,
\begin{align}
    \bm {\tilde U} &= \bigoplus_n \bm U\, ,
\end{align}

With this we can write the network response from a deviation in $\varrho$ and $\omega$ to a deviation in $\hat q$ and $p$ as
\begin{align}
\Tnet(s) &= \frac{1}{2} \bm {\tilde U}^{\dagger} \frac{1}{s} \bm J^\text{net} \bm {\tilde U}\, .
\label{eq:relation_Jlin_Tnet}
\end{align}

The major remaining challenge to applying Proposition \ref{thm:small_phase_theorem_block} and getting decentralized conditions, is to decompose this matrix into edge-wise contributions. As we will see in the next section, we can treat the network response as a superposition of two-node systems.

The full system $\Tnod \# \Tnet$ then has the structure
\begin{align}
\bigoplus_n \Tn_n(s) \; \# \; \Tnet(s)\, .
\label{eq:full-system}
\end{align}

\section{Proof of the main Proposition 
\ref{pro:main:power_grid}}

We now proceed to the proof of the main proposition.
\label{sec:Proof: Linear stability of power grids with $V$-$q$ droop}
The first step is to provide conditions for the sectoriality of the nodal transfer matrices. Then we provide the edge-wise decomposition of the network response, and demonstrate under which conditions it is semi-stable frequency-wise semi-sectorial. The main Theorem then follows by applying Proposition~\ref{thm:small_phase_theorem_block}.

\subsection{Sectoriality of the nodal transfer matrix} 
Each $\Tn_n(s)$ of the form \eqref{eq:definition_entries_T_n} is a complex $2\times2$ matrix. Here, we give conditions that ensure that it is strictly accretive, meaning the numerical range is contained in the open right half plane: $\underline \phi > - \pi/2$ and $\overline \phi < \pi/2$.
Is gives especially concise conditions for sectoriality.

\begin{lem}
\label{lem:accretive T_n}
    A complex $2\times 2$ matrix $\Tn_n(s)$ is strictly accretive, hence sectorial, if and only if its four entries [see \eqref{eq:definition_entries_T_n}] fulfill \eqref{eq:tr_T_nodes_definite} and \eqref{eq:det_T_nodes_positive}:
    \begin{align}
    \Re({T}^{\omega p}_n) + \Re({T}^{\varrho \hat q}_n) &> 0\, , \\
    \Re({T}^{\omega p}_n) \cdot\Re({T}^{\varrho \hat q}_n) &>\frac{1}{4} \left|{T}^{\omega \hat q}_n + \overline {T}^{\varrho p}_n\right|^2\, .
\end{align}
\end{lem}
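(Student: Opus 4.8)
The plan is to reduce the claim to a standard fact about positive definiteness of the Hermitian part of a $2\times2$ matrix. The starting observation is that a matrix $\bm M$ is strictly accretive, i.e.\ $\Re(\bm z^\dagger \bm M \bm z) > 0$ for all $\bm z \neq \bm 0$, exactly when its Hermitian part $\tfrac12(\bm M + \bm M^\dagger)$ is positive definite, since $\Re(\bm z^\dagger \bm M \bm z) = \bm z^\dagger \tfrac12(\bm M + \bm M^\dagger) \bm z$. So I would first write down the Hermitian part of $\Tn_n$ explicitly. With the entries as in \eqref{eq:definition_entries_T_n}, it reads
\[
\tfrac12(\Tn_n + \Tn_n^\dagger) = \bmat{\Re(T_n^{\varrho \hat q}) & \tfrac12(T_n^{\varrho p} + \overline{T_n^{\omega \hat q}}) \\ \tfrac12(\overline{T_n^{\varrho p}} + T_n^{\omega \hat q}) & \Re(T_n^{\omega p})},
\]
a Hermitian matrix whose off-diagonal entry is $\tfrac12(T_n^{\varrho p} + \overline{T_n^{\omega \hat q}})$.

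Next I would invoke the characterization that a Hermitian $2\times2$ matrix is positive definite if and only if its trace and its determinant are both positive; this follows from writing the two real eigenvalues $\lambda_{1,2}$ via $\lambda_1 + \lambda_2 = \operatorname{tr}$ and $\lambda_1 \lambda_2 = \det$, so that $\det > 0$ forces equal signs and $\operatorname{tr} > 0$ then forces both positive. The trace of the Hermitian part is $\Re(T_n^{\varrho \hat q}) + \Re(T_n^{\omega p})$, giving exactly \eqref{eq:tr_T_nodes_definite}. Its determinant is $\Re(T_n^{\varrho \hat q})\Re(T_n^{\omega p}) - \tfrac14\,|T_n^{\varrho p} + \overline{T_n^{\omega \hat q}}|^2$, where I have used $(T_n^{\varrho p} + \overline{T_n^{\omega \hat q}})(\overline{T_n^{\varrho p}} + T_n^{\omega \hat q}) = |T_n^{\varrho p} + \overline{T_n^{\omega \hat q}}|^2$; requiring this to be positive is exactly \eqref{eq:det_T_nodes_positive}, after noting $|T_n^{\varrho p} + \overline{T_n^{\omega \hat q}}| = |T_n^{\omega \hat q} + \overline{T_n^{\varrho p}}|$. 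Since both directions of the trace--determinant criterion are equivalences, this establishes the ``if and only if''.

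Finally, for the ``hence sectorial'' clause I would note that strict accretivity places the whole numerical range $W(\Tn_n)$ in the open right half-plane. Since $W(\Tn_n)$ is compact and convex (Toeplitz--Hausdorff), its elements have arguments bounded strictly inside $(-\pi/2, \pi/2)$, so $\underline\phi > -\pi/2$, $\overline\phi < \pi/2$, and in particular $0 \notin W(\Tn_n)$, i.e.\ $\Tn_n$ is sectorial. I do not anticipate a genuine obstacle here: the only points requiring care are the conjugation bookkeeping that turns the off-diagonal term into $|T_n^{\varrho p} + \overline{T_n^{\omega \hat q}}|$, and the decision to use the trace--determinant form of positive-definiteness rather than Sylvester's leading-minor form, since the paper's first condition is stated on the trace (the two are equivalent precisely because the determinant condition is also imposed).
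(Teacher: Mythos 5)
Your proposal is correct and follows essentially the same route as the paper's own proof: both reduce strict accretivity to positive definiteness of the Hermitian part $\tfrac12(\Tn_n + \Tn_n^\dagger)$, whose numerical range is the real part of $W(\Tn_n)$, and then apply the trace--determinant criterion for a Hermitian $2\times2$ matrix, recovering \eqref{eq:tr_T_nodes_definite} and \eqref{eq:det_T_nodes_positive}. Your write-up is simply more explicit about the entry-wise bookkeeping and the conjugation identity $|T_n^{\varrho p} + \overline{T_n^{\omega \hat q}}| = |T_n^{\omega \hat q} + \overline{T_n^{\varrho p}}|$, which the paper leaves implicit.
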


\begin{proof}
If the numerical range $W$ of $\Tn_n$ (see \eqref{eq:numerical range}) is contained in the right-hand side, the real part of $W(\Tn_n)$ has to be strictly positive: $\Re(W(\Tn_n(s))) > 0$. The real part of the numerical range is given by the numerical range of the Hermitian part of $\Tn_n(s)$, which we denote $\bm {\hat T}_n(s) = \frac{1}{2} (\Tn_n(s) + \Tn_n(s)^\dagger)$.
The numerical range of a Hermitian matrix is on the real axis. It is strictly positive if and only if the matrix is positive definite. The two by two matrix $\bm {\hat T}_n(s)$ is positive definite if and only if its determinant and its trace are positive. Expressed in terms of the matrix elements of $\Tn_n(s)$ these conditions are \eqref{eq:tr_T_nodes_definite} and \eqref{eq:det_T_nodes_positive}.
\end{proof}

\subsection{Edge-wise decomposition and analysis of the network response} 
We now return to the network response. Our goal is to show that under the condition that [see \eqref{eq:alpha_bound_T_lines_definite}]
\begin{align}
    \alpha_n \ge \alpha^{\text{theory}}_n \coloneqq 2 \sum_{m} \tilde Y_{nm}\frac{V_m^\circ}{\cos(\varphi_n^\circ - \varphi_m^\circ)},
\end{align}
we can decompose the network response into frequency wise semi-stable and semi-sectorial edge contributions.

\begin{lem}
   \textnormal{$\bm J^\text{net}$} can be decomposed into edge-wise contributions $\bm J_e$ such that
\begin{align}
\label{eq: edge-wise decomposition of J_net}
    \bm J^\textnormal{net} = \bm{B_+}^\dagger \bigoplus_e \bm J_e \bm{B_+}\, ,
\end{align}
if we introduce an edge-wise decomposition $\alpha'_{nm}$ of $\alpha_n$ such that
\begin{align}
\label{eq:alpha_n edge-wise decomposition}
    \alpha_n = - 2V^\circ_n \sum_{m \neq n} L_{nm} \alpha'_{nm}.
\end{align}
\end{lem}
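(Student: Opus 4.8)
The plan is to exploit that every block of $\bm J^\text{net}$ is a \emph{linear} function of the Laplacian $\bm L$, combined with the fact that a graph Laplacian splits over its edges. Writing $\bm e_n$ for the $n$-th standard basis vector, we have $\bm L=\sum_e\bm L_e$ with the rank-one single-edge Laplacians $\bm L_e:=-L_{nm}(\bm e_n-\bm e_m)(\bm e_n-\bm e_m)^\intercal$ for $e=(n,m)$, each supported on the index pair $\{n,m\}$. First I would rewrite the coupling blocks in manifestly $\bm L$-linear form: $\bm\Kappa^\circ=[\bm\vbar^\circ]\,\bm L\,[\bm v^\circ]$ and $[\bm\sigma^\circ]=\big[[\bm\vbar^\circ]\,\bm L\,\bm v^\circ\big]$ (using $\sigma_n^\circ=\sum_m\Kappa_{nm}^\circ=\vbar_n^\circ(\bm L\bm v^\circ)_n$), with the conjugate blocks $\bm\Kappabar^\circ$, $[\bm\sigmabar^\circ]$ obtained analogously.

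The $\alpha$-contribution $\tfrac12[\bm\alpha][\bm V^\circ]$, which sits on the diagonal of all four blocks, is not a priori a function of $\bm L$, and this is exactly where the hypothesis enters. Substituting the assumed edge decomposition $\alpha_n=-2V_n^\circ\sum_{m\neq n}L_{nm}\alpha'_{nm}$ turns it into $\tfrac12\alpha_n V_n^\circ=-(V_n^\circ)^2\sum_{m\neq n}L_{nm}\alpha'_{nm}$, i.e.\ a sum over the edges incident to $n$, each summand proportional to a single off-diagonal entry $L_{nm}$ and hence supported on $\{n,m\}$. After this substitution the \emph{entire} matrix $\bm J^\text{net}$ is linear in the edge data $\{\bm L_e\}$.

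I would then push the splitting $\bm L=\sum_e\bm L_e$ through by linearity, obtaining $\bm J^\text{net}=\sum_e\bm J_e^\text{net}$, where $\bm J_e^\text{net}$ is the same expression with $\bm L$ replaced by $\bm L_e$ and the node-diagonal $\alpha$-terms replaced by their edge-$e$ parts $-(V_n^\circ)^2 L_{nm}\alpha'_{nm}$ and $-(V_m^\circ)^2 L_{nm}\alpha'_{mn}$ at the two endpoints. Since $\bm L_e$ is supported on $\{n,m\}$, each $\bm J_e^\text{net}$ is nonzero only on the rows and columns belonging to $\theta_n,\thetabar_n,\theta_m,\thetabar_m$; reading off that nonzero block defines a $4\times4$ matrix $\bm J_e$ with $\bm J_e^\text{net}=\bm P_e^\dagger\bm J_e\bm P_e$. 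Summing over $e$ and invoking the definition of $\bm{B_+}$ from Appendix~\ref{sec:notation}, namely $\sum_e\bm P_e^\dagger\bm J_e\bm P_e=\bm{B_+}^\dagger\bigoplus_e\bm J_e\bm{B_+}$, then gives exactly \eqref{eq: edge-wise decomposition of J_net}. Because $\bm J^\text{net}$ is Hermitian and each $\bm L_e$ is real symmetric, each $\bm J_e$ comes out Hermitian, which is what the subsequent sectoriality analysis needs.

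The only point requiring care — and the conceptual crux — is the treatment of the \emph{diagonal} entries $\Kappa_{nn}^\circ$, $\sigma_n^\circ$ and $\tfrac12\alpha_n V_n^\circ$, which live at node $n$ yet must be apportioned consistently among all edges incident to $n$. For $\bm\Kappa^\circ$ and $\bm\sigma^\circ$ this apportionment is automatic and forced by the Laplacian identity $\sum_{m\neq n}L_{nm}=-L_{nn}$, which guarantees $\sum_{e\ni n}(\bm\Kappa_e^\circ)_{nn}=\Kappa_{nn}^\circ$ and $\sum_{e\ni n}\sigma_{n,e}^\circ=\sigma_n^\circ$. For the $\alpha$-term no such identity exists a priori, so the weights $\alpha'_{nm}$ must be supplied as extra degrees of freedom; this is precisely the role of hypothesis \eqref{eq:alpha_n edge-wise decomposition}, a per-node linear constraint that is generically underdetermined and hence solvable. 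I expect verifying that these per-edge pieces reassemble into the correct node-diagonal entries to be the main (albeit routine) step, with everything else following from linearity and the support property of $\bm L_e$.
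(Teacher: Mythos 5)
Your proposal is correct and is essentially the paper's own argument in slightly more formal dress: the paper likewise collects terms edge by edge, splitting the diagonal entries of $\bm\Kappa^\circ$ and $[\bm\sigma^\circ]$ through the Laplacian row-sum identity $L_{nn}=-\sum_{m\neq n}L_{nm}$, apportioning the $\alpha$-terms through the hypothesized weights $\alpha'_{nm}$, and assembling the edge-supported pieces with the selection matrices $\bm P_e$ and the defining property of $\bm{B_+}$. The only substantive difference is that the paper also records the resulting $4\times 4$ blocks explicitly (as $\bm J_e=-L_{nm}\bm R^\dagger\bm{\tilde J}_e\bm R$), which this lemma's statement does not require but which the subsequent semi-definiteness analysis relies on.
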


\begin{proof}
The fundamental strategy is to collect the terms that represent each edge. 
In each of the four blocks of $\bm J^\text{net}$, the off diagonal matrix elements naturally have an edge associated to them. The diagonal elements of $\bm K^\circ$ can be written as a sum of edge-wise contributions $K^\circ_{nn} = - |V^\circ_n|^2 \sum_{m \neq n} L_{nm}$. The $\bm \sigma^\circ$ can be written as $\sigma^\circ_n = \sum_{m \neq n} K^\circ_{nm} - |V^\circ_n|^2 \sum_{m \neq n} L_{nm}$. We then introduce a similar decomposition for $\frac12 \bm \alpha$ times $\bm V^\circ$, writing $\frac 12 \alpha_n V_n^\circ = - |V^\circ_n|^2 \sum_{m \neq n} L_{nm} \alpha'_{nm}$. Now, the contributions to the matrix elements of $\bm J^\text{net}$ associated to an edge $e = (n,m)$ all live on the rows and columns associated to $n$ and $m$. Thus, we can place them in 
a $4\times 4$ matrix $\bm J^e$
using the matrices $\bm P_e$ of \eqref{eq:P-e on z zbar} that pick out exactly those rows and columns.

To collect these edge-wise contributions, we introduce 
$C'_{nm} \coloneqq \frac{\vbar_n^\circ}{v_n^\circ} (1 + \alpha'_{nm}) - \frac{\vbar_m^\circ}{v_n^\circ}$. Then we can succinctly write the four by four matrix of elements originating from a single edge as $\bm J_e = -L_{nm} \bm R^\dagger \bm{\tilde{J}}_e \bm R$ with
\begin{align}
    \bm {\tilde J}_e &= \bmat{ 1 + \alpha'_{nm} & C'_{nm} & - 1 & 0
        \\
        \Cbar'_{nm} & 1 + \alpha'_{nm} & 0 & - 1
    \\
        -1 & 0 & 1 + \alpha'_{mn} & C'_{mn}
        \\
        0 & -1 & \Cbar'_{mn} & 1 + \alpha'_{mn}}.
\end{align}
and $\bm R := \text{diag}(\vbar_n^\circ, v_n^\circ, \vbar_m^\circ, v_m^\circ)$.
With this \eqref{eq: edge-wise decomposition of J_net} can be verified by straightforward calculation, collecting all terms associated to each edge.
\end{proof}

As $\bm J^\text{net}$, and the $\bm J_e$, are Hermitian, their numerical range is on the real axis. They are (semi-)sectorial, if and only if they are (semi-)definite. In the phase stability theorems, it is assumed that the transfer matrix $G(\epsilon^+)$ has phase center zero. From \eqref{eq:relation_Jlin_Tnet} we see that this implies that $\bm J^\text{net}$ and thus $\bm J_e$ have to be positive semi-definite.

\begin{lem}
\label{lem:edge-wise alpha bounds and phase differences}
$\bm J_e$ is positive semi-definite, hence semi-sectorial, if 
\begin{gather}
\label{eq:phase_bounds}
|\varphi_n^\circ - \varphi_m^\circ| < \frac{\pi}{2}\,  
\quad\forall\, e=(n,m)\in \mathcal{E},\\
\label{eq:edge-wise_alpha_bound}
    \alpha'_{nm} \ge \frac{V^\circ_m}{V^\circ_n \cos(\varphi_n^\circ - \varphi_m^\circ)} - 1\, .
\end{gather}
\end{lem}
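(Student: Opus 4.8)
The plan is first to reduce the claim to the positive semi-definiteness of $\bm{\tilde J}_e$. Since $\bm J_e = -L_{nm}\,\bm R^\dagger \bm{\tilde J}_e \bm R$ with $-L_{nm}>0$ for a connected pair $(n,m)\in\mathcal E$ (the off-diagonal Laplacian entry is negative) and $\bm R=\mathrm{diag}(\vbar_n^\circ,v_n^\circ,\vbar_m^\circ,v_m^\circ)$ invertible because $V_n^\circ,V_m^\circ>0$, Sylvester's law of inertia gives $\bm J_e\succeq 0$ if and only if $\bm{\tilde J}_e\succeq 0$. By the remark preceding the lemma, a Hermitian matrix is semi-sectorial once it is positive semi-definite, so it suffices to show $\bm{\tilde J}_e\succeq 0$ under \eqref{eq:phase_bounds} and \eqref{eq:edge-wise_alpha_bound} (the latter applied to both orderings $\alpha'_{nm}$ and $\alpha'_{mn}$).

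To this end I would use the block form $\bm{\tilde J}_e=\bmat{\bm A & -\bm I\\ -\bm I & \bm D}$, where $\bm A$ and $\bm D$ are the Hermitian $2\times2$ blocks with equal real diagonal $1+\alpha'_{nm}$, resp.\ $1+\alpha'_{mn}$, and off-diagonal entries $C'_{nm}$, resp.\ $C'_{mn}$; the coupling block is exactly $-\bm I$. Writing $\beta:=V_m^\circ/V_n^\circ$ and $\Delta:=\varphi_n^\circ-\varphi_m^\circ$, a short computation gives $|C'_{nm}|^2=(1+\alpha'_{nm})^2+\beta^2-2(1+\alpha'_{nm})\beta\cos\Delta$ and, with $\beta\mapsto\beta^{-1}$, the analogous expression for $|C'_{mn}|^2$. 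The phase bound \eqref{eq:phase_bounds} ensures $\cos\Delta>0$, so the thresholds in \eqref{eq:edge-wise_alpha_bound} are finite and positive, and one checks that $\det\bm D=\beta^{-1}\bigl(2(1+\alpha'_{mn})\cos\Delta-\beta^{-1}\bigr)>0$ with positive trace, hence $\bm D\succ 0$. Taking the Schur complement with respect to the $\bm D$-block, and using $(-\bm I)\bm D^{-1}(-\bm I)=\bm D^{-1}$, reduces $\bm{\tilde J}_e\succeq 0$ to the single $2\times2$ inequality $\bm A-\bm D^{-1}\succeq 0$.

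The heart of the argument is to establish $\bm A-\bm D^{-1}\succeq 0$ on the whole admissible region $1+\alpha'_{nm}\ge\beta/\cos\Delta$, $1+\alpha'_{mn}\ge\beta^{-1}/\cos\Delta$. Rather than expand the interior, I would argue by monotonicity in the Loewner order. On the boundary, where both bounds in \eqref{eq:edge-wise_alpha_bound} hold with equality, a direct computation -- cleanest after factoring the common unimodular phase $e^{-j(\varphi_n^\circ+\varphi_m^\circ)}$ out of $C'_{nm}$ and $C'_{mn}$, after which the surviving off-diagonal parts collapse to $\mp j\beta\tan\Delta$ and cancel, while the diagonal $(1+\alpha'_{nm})-(1+\alpha'_{mn})/\det\bm D$ vanishes -- shows $\bm A-\bm D^{-1}=\bm 0$. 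Increasing either droop parameter only helps: $\partial\bm A/\partial\alpha'_{nm}$ is a rank-one positive semi-definite matrix, whereas $\bm D$ grows in the Loewner order with $\alpha'_{mn}$, so $-\bm D^{-1}$ grows by antitonicity of the matrix inverse on positive definite matrices. Thus $\bm A-\bm D^{-1}$ is jointly non-decreasing and, starting from $\bm 0$ at the threshold, remains positive semi-definite throughout. The main obstacle is precisely this boundary identity and the associated phase and constant bookkeeping; the remaining structure (congruence, Schur complement, monotonicity) is routine. Note that \eqref{eq:phase_bounds} is indispensable, since $\cos\Delta>0$ is what keeps the thresholds finite and both diagonal blocks positive definite.
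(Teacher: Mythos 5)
Correct, and essentially the paper's own route: the paper's proof consists entirely of the remark that the claim ``can be verified with a straightforward calculation, e.g., using the Schur complement lemma,'' and your argument is exactly that calculation carried out --- congruence reduction from $\bm J_e$ to $\bm{\tilde J}_e$, positive definiteness of the $\bm D$-block under \eqref{eq:edge-wise_alpha_bound}, and the Schur complement condition $\bm A - \bm D^{-1} \succeq 0$. Your boundary identity $\bm A = \bm D^{-1}$ at equality in \eqref{eq:edge-wise_alpha_bound}, combined with Loewner monotonicity in $\alpha'_{nm}$ and antitonicity of the inverse in $\alpha'_{mn}$, is a clean way to dispose of the interior of the admissible region that the paper leaves implicit.
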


\begin{proof}
This can be verified with a straightforward calculation, e.g.,  using the Schur complement lemma.
\end{proof}

The edge-wise decomposition of $\alpha_n$ leaves us with the freedom to weight the $\alpha'_{nm}$ freely, as long as they satisfy \eqref{eq:alpha_n edge-wise decomposition}.
The tightest bound is achieved by weighting them proportional to the bounds derived in \eqref{eq:edge-wise_alpha_bound}. However, we can achieve a much more concise node-wise condition for the $\alpha_n$, which are actual dynamical parameters of the nodal actors.

\begin{lem}
\textnormal{$\Tnet(s)$} can be decomposed into semi-stable frequency-wise sectorial $\Te_e$ as
\begin{align}
    \Tnet(s) = \bm{\tilde U}^\dagger \bm{B_+}^\dagger \bigoplus_e \Te_e(s) \bm{B_+} \bm{\tilde U}\, ,
\end{align}
if  $\alpha_n \ge \alpha_n^\textnormal{theory}$, i.e., \eqref{eq:alpha_bound_T_lines_definite} holds.
\end{lem}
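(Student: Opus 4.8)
The plan is to assemble the two preceding lemmas together with the identity \eqref{eq:relation_Jlin_Tnet} into the claimed edge decomposition, and then to certify that each resulting edge block is semi-stable frequency-wise semi-sectorial so that Proposition~\ref{thm:small_phase_theorem_block} applies. First I would insert the edge-wise decomposition \eqref{eq: edge-wise decomposition of J_net} of $\bm J^{\text{net}}$ into \eqref{eq:relation_Jlin_Tnet}. Since $1/s$ is a scalar it commutes with $\bm{B_+}$ and $\bm{\tilde U}$, and can be absorbed together with the factor $1/2$ into the direct sum, giving
\begin{align}
  \Tnet(s) = \bm{\tilde U}^\dagger \bm{B_+}^\dagger \bigoplus_e \Te_e(s)\, \bm{B_+}\bm{\tilde U}\, , \qquad \Te_e(s) \coloneqq \frac{1}{2s}\bm J_e\, .
\end{align}
This is exactly the form $\bm G = \bm B^\dagger \bigoplus_e \Te_e(s)\,\bm B$ required by Proposition~\ref{thm:small_phase_theorem_block}, with $\bm B = \bm{B_+}\bm{\tilde U}$, so no separate argument about $\bm{\tilde U}$ preserving sectoriality is needed.

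Second, I would show that the node-wise hypothesis $\alpha_n \ge \alpha_n^{\text{theory}}$ guarantees a feasible edge splitting $\alpha'_{nm}$, i.e.\ one satisfying both the linear constraint \eqref{eq:alpha_n edge-wise decomposition} and the per-edge bound \eqref{eq:edge-wise_alpha_bound} that Lemma~\ref{lem:edge-wise alpha bounds and phase differences} needs for $\bm J_e \succeq 0$. The natural choice is $\alpha'_{nm} = V_m^\circ /\big(V_n^\circ \cos(\varphi_n^\circ - \varphi_m^\circ)\big)$, which is well defined and positive under the phase bound \eqref{eq:phase_bounds}, exceeds the bound \eqref{eq:edge-wise_alpha_bound} by exactly $1$, and, using $\tilde Y_{nm} = -L_{nm}$, reproduces $\alpha_n^{\text{theory}}$ when summed through \eqref{eq:alpha_n edge-wise decomposition}. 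For the strict case $\alpha_n > \alpha_n^{\text{theory}}$ the non-negative surplus is routed into the splitting by increasing some $\alpha'_{nm}$ further; the coefficients $-2V_n^\circ L_{nm} = 2V_n^\circ \tilde Y_{nm}$ in \eqref{eq:alpha_n edge-wise decomposition} are non-negative, so this raises $\alpha_n$ to the prescribed value while keeping every $\alpha'_{nm}$ above its bound. The one point needing a short verification is that increasing $\alpha'_{nm}$ does not destroy positivity of $\bm J_e$: differentiating the quadratic form of $\bm{\tilde J}_e$ in $\alpha'_{nm}$ yields $|z_{n,1}|^2 + |z_{n,2}|^2 + 2\Re\!\big(e^{-2j\varphi_n^\circ}\bar z_{n,1} z_{n,2}\big) \ge 0$ by Cauchy--Schwarz, so $\bm z^\dagger \bm{\tilde J}_e \bm z$ is monotone non-decreasing in $\alpha'_{nm}$ and semi-definiteness is preserved. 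Hence Lemma~\ref{lem:edge-wise alpha bounds and phase differences} applies edge by edge and each $\bm J_e \succeq 0$.

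Finally, I would read off the transfer-function properties of $\Te_e(s) = \bm J_e/(2s)$. As $\bm J_e$ is a constant Hermitian positive semi-definite matrix, its numerical range lies in $[0,\infty)$; multiplying by $1/(2s)$ for $s = j\omega$ rotates this onto a ray through the origin along the imaginary axis (argument $\mp\pi/2$), so $\delta(\Te_e) = 0$ and $\Te_e$ is frequency-wise semi-sectorial, with phase center $0$ at $s = \epsilon^+$ since $\bm J_e \succeq 0$. The only pole sits at $s=0$ on the imaginary axis, so $\Te_e$ is semi-stable, and because $\bm J_e$ is independent of $s$ the rank of $\Te_e(s)$ is constant along the indented contour. (It is semi-sectorial rather than sectorial because the global zero mode $\bmat{\bm 1 & -\bm 1}^\intercal$ of $\bm J^{\text{net}}$ restricts to a kernel of each $\bm J_e$, so $0 \in W(\Te_e)$.) These are precisely the edge hypotheses of Proposition~\ref{thm:small_phase_theorem_block}. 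I expect the main obstacle to lie in the second step: reconciling the single, concise node-wise inequality with the collection of per-edge semi-definiteness bounds, and in particular checking that the surplus $\alpha_n - \alpha_n^{\text{theory}}$ can always be routed into the edge splitting without breaking any $\bm J_e \succeq 0$, which is what the monotonicity computation secures.
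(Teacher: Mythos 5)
Your proof is correct and follows essentially the same route as the paper's: you define $\Te_e = \bm J_e/(2s)$, invoke Lemma~\ref{lem:edge-wise alpha bounds and phase differences} for positive semi-definiteness of each edge block, and observe that the single pole at $s=0$ together with the $s$-independence of $\bm J_e$ gives semi-stability and constant rank along the contour; your explicit construction of a feasible splitting $\alpha'_{nm}$ simply fills in what the paper compresses into the remark that \eqref{eq:edge-wise_alpha_bound} can always be satisfied when $\alpha_n \ge \alpha_n^{\text{theory}}$. Two minor observations: your monotonicity computation is redundant, since Lemma~\ref{lem:edge-wise alpha bounds and phase differences} already covers every $\alpha'_{nm}$ above the bound (not only the boundary case), and what you correctly establish is \emph{semi-}sectoriality of the $\Te_e$ (the zero mode forces $0 \in W(\Te_e)$), which matches the paper's own proof rather than the word ``sectorial'' appearing in the lemma statement.
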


\begin{proof}
    The $\Te_e(s)$ are given by
\begin{align}\label{eq:relation_Je_Te}
    \Te_e \coloneqq \frac{1}{2s} \bm J_e\, .
\end{align}
According to Lemma~\ref{lem:edge-wise alpha bounds and phase differences}, \eqref{eq:phase_bounds} and \eqref{eq:edge-wise_alpha_bound} imply frequency-wise semi-sectorial $\bm J_e$ and thus $\Te_e$.
The factor $1/s$ makes them semi-stable, because the pole is at zero and the rank left constant along the contour.
Using the definition of $\alpha'_{nm}$ we see that \eqref{eq:edge-wise_alpha_bound} can always be satisfied if $\alpha_n\ge\alpha_n^\text{theory}$.
\end{proof}

As $\bigoplus_e \Te_e(s)$ only depends on $s$ through scaling by a common factor, we also immediately have that its rank is constant along the contour.
Thus, $\Tnet(s)$ is semi-stable frequency-wise semi-sectorial.
On $ s \in j(\epsilon^+, \infty]$ the phases of the $\Te_e(s)$ are simply:
$    \underline{\phi}(\Te_e) = -\frac{\pi}{2}=     \overline{\phi}(\Te_e)  .$
on the quarter circle of radius $\epsilon^+$ from $j \epsilon^+$ to $\epsilon^+$, they rotate to $0$.

In conclusion, \eqref{eq:alpha_bound_T_lines_definite} ensures semi-stable frequency-wise sectorial $\Tnet(s)$ with a DC phase center of 0, which is a pole, and all phases $-\frac{\pi}{2}$ at $s \in j(\mathbb{R}\setminus\Omega)$.

\subsection{Putting everything together}

\begin{proof} We can now apply Proposition~\ref{thm:small_phase_theorem_block} to the system given by \eqref{eq:full-system}, with $\bm H = \Tnod = \bigoplus_n \Tn_n(s)$, $\bm B =\bm{B_+} \bm{\tilde U}$, and  $\bm G = \Tnet = \bm B^\dagger \bigoplus_e \Te_e(s) \bm B$.
We have shown in the previous sections that with \eqref{eq:tr_T_nodes_definite}-\eqref{eq:alpha_bound_T_lines_definite}, (i) the $\Tn_n$ in \eqref{eq:definition_entries_T_n} and \eqref{eq:T_n} are in $\mathcal{RH}_\infty$ (hence stable) and frequency-wise sectorial according to Lemma \ref{lem:accretive T_n};
(ii) the $\Te_e$ in 
\eqref{eq:relation_Je_Te}
are semi-stable frequency-wise semi-sectorial according to Lemma \ref{lem:edge-wise alpha bounds and phase differences}.
They are also semi-stable along the shared indented imaginary axis because they share the same poles.
Finally, $\bm G$ has constant rank along the contour, because it depends on $s$ only by a prefactor $1/s$.

We now proceed to show that \eqref{eq:small phase theorem, block structure version, sectoriality condition H}-\eqref{eq:small phase theorem, block structure version, stability condition inf} hold.
Equation \eqref{eq:small phase theorem, block structure version, sectoriality condition H} is fulfilled for \eqref{eq:tr_T_nodes_definite}-\eqref{eq:det_T_nodes_positive}, as $\underline\phi(\Tn_n) > - \pi/2$ and $\overline\phi(\Tn_n) <  \pi/2$.
Equation \eqref{eq:small phase theorem, block structure version, semi-sectoriality condition G} is fulfilled, as
$    \underline{\phi}(\Te_e) = -\frac{\pi}{2}=     \overline{\phi}(\Te_e)  .$
Similarly, the combined phases of $\Tnet$ and $\Tnod$ lie within $(-\pi,0)$ at all $s \in j(\mathbb{R}\setminus\Omega)$, hence
\eqref{eq:small phase theorem, block structure version, stability condition sup} and \eqref{eq:small phase theorem, block structure version, stability condition inf} hold. This concludes the proof.
\end{proof}

\begin{figure}
    \centering
    \includegraphics[width=\columnwidth]{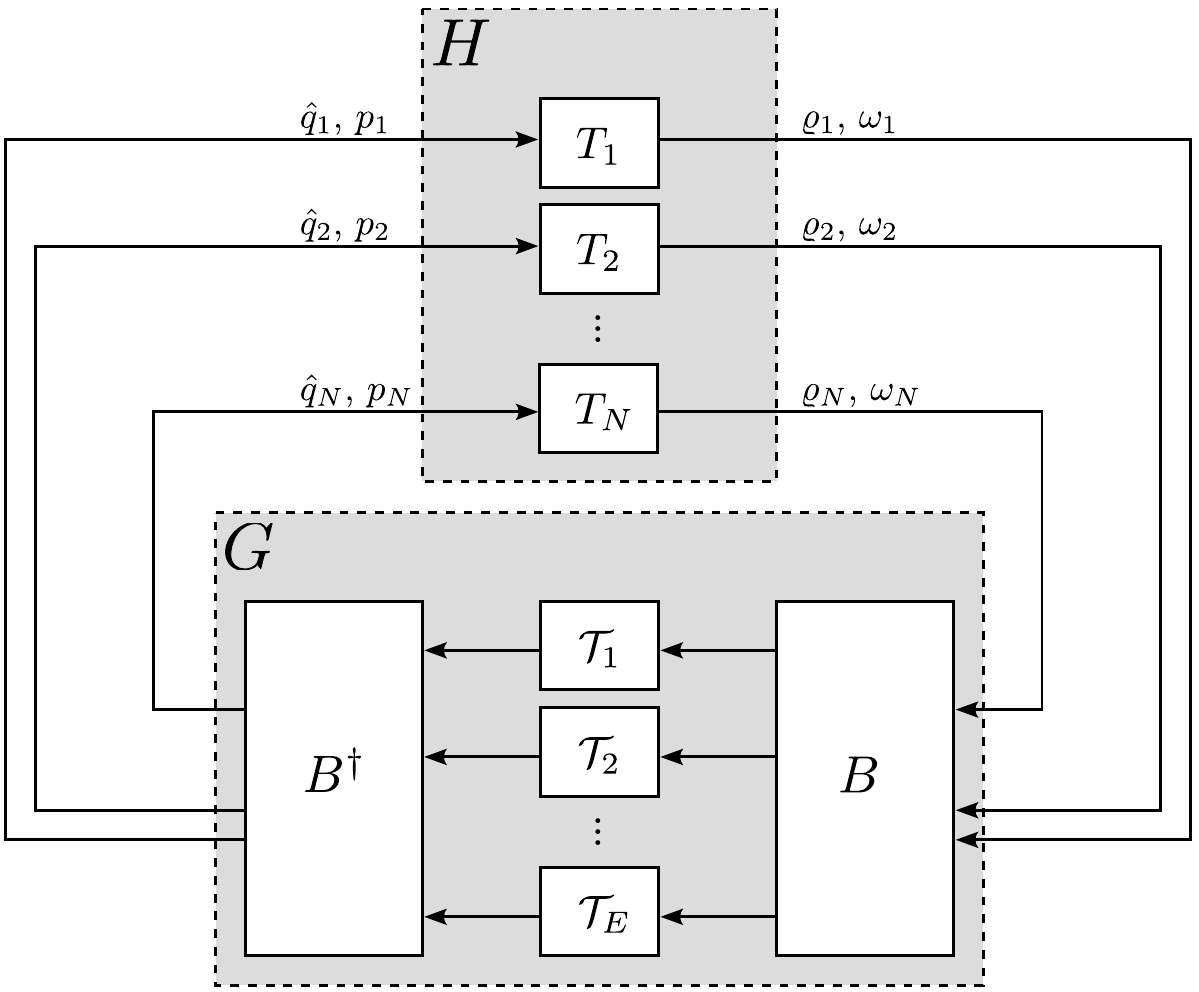}
    \caption{Block diagram representation of the system considered. 
    Block $H$ is the nodal response to the lines' output, and block $G$ is the lines' response to the nodes' dynamics.}
    \label{fig:block-diagram}
\end{figure}

As $\Te_e$ have phase $-\frac{\pi}{2}$ at all non-zero frequencies, the phases of $\Tn_n$ need not be contained in the open right half plane. However, $\bm{\hat T}_n > 0$ is sufficient for our examples below and gives the most concise conditions.

\section{Proof of Proposition \ref{thm:small_phase_theorem_block}}
\label{sec:proof_of_phase_theorem, block version}

\subsection{Preliminaries}

Let us recall two properties of $W'$ that will prove useful later on. 
First, it follows from the definition of $W'$ that 
\begin{align}
    W'({\bm B}^\dagger{\bm M}{\bm B}) &\subseteq\left( W'({\bm M}) \cup 0\right)\, ,
    \label{eq:sub-field}
\end{align}
for any $\bm M \in \mathbb{C}^{m\times m}$ and ${\bm B}$ of appropriate size, and therefore, 
\begin{align}
    \overline{\phi}({\bm B}^\dagger{\bm M}{\bm B}) &\leq \overline{\phi}({\bm M})\, , & 
    \underline{\phi}({\bm B}^\dagger{\bm M}{\bm B}) &\geq \underline{\phi}({\bm M})\, .
\end{align}
Second, for a block diagonal system ${\bm M} = \bigoplus_e{\bm M}_e$, the numerical range is the convex hull of the blocks' numerical ranges \cite[Property 1.2.10]{horn_topics_1991}:
\begin{align}
    W({\bm M}) &= {\rm Conv}\left(W({\bm M}_1),...,W({\bm M}_E)\right)\, .
    \label{eq:conv-hull}
\end{align}
Thus, if $\bm M$ is semi-sectorial, 
\begin{align}
    \overline{\phi}({\bm M}) &= \max_e\overline{\phi}({\bm M}_e)\, , & 
    \underline{\phi}({\bm M}) &= \min_e\underline{\phi}({\bm M}_e)\, .
    \label{eq:phase bounds block matrix}
\end{align}

With this toolbox, we are now ready to prove our main result. 
The proof of Proposition~\ref{thm:small_phase_theorem_block} relies on the four following Lemmas. 

\begin{lem}\label{lem:stability_block}
    Let ${\bm T}_1,...,{\bm T}_N$ be stable transfer functions. Then ${\bm T}(s) = \bigoplus_n{\bm T}_n(s)$ is stable.    
\end{lem}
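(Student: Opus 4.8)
The plan is to reduce the direct sum to a genuinely block-diagonal transfer matrix and then argue that stability is inherited block-wise. Recall from the notational preliminaries that $\bm T(s)=\bigoplus_n\bm T_n(s)=\sum_n\bm P_n^\dagger\bm T_n(s)\bm P_n$ is \emph{not} block diagonal with respect to the stacking $[\bm z,\bm\zbar]^\intercal$, but becomes block diagonal once the coordinates are reordered to $[z_1,\zbar_1,\dots,z_N,\zbar_N]^\intercal$. I would therefore introduce the constant (that is, $s$-independent) permutation matrix $\bm\Pi$ that realizes this reordering, so that
\begin{align}
\bm\Pi\,\bm T(s)\,\bm\Pi^\intercal=\text{diag}\big(\bm T_1(s),\dots,\bm T_N(s)\big).
\end{align}
Because $\bm\Pi$ is a fixed invertible matrix, the similarity $\bm M\mapsto\bm\Pi\bm M\bm\Pi^\intercal$ merely permutes the entries of the matrix and hence neither creates nor cancels poles; it then suffices to establish stability of the block-diagonal matrix on the right.

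For the block-diagonal matrix the argument is immediate: every scalar entry of $\text{diag}(\bm T_1,\dots,\bm T_N)$ is an entry of exactly one block $\bm T_n$, and conversely, so its set of poles is the finite union $\bigcup_n\{\text{poles of }\bm T_n\}$. Equivalently, stacking minimal state-space realizations $(\bm A_n,\bm B_n,\bm C_n,\bm D_n)$ of the blocks yields a realization of $\bm T$ whose state matrix is the block-diagonal $\bigoplus_n\bm A_n$, with spectrum $\bigcup_n\mathrm{spec}(\bm A_n)$. Each $\bm T_n$ is stable by hypothesis, so all of its poles lie in the open left half-plane, and a finite union of subsets of the open left half-plane remains in the open left half-plane. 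Hence $\text{diag}(\bm T_1,\dots,\bm T_N)$, and therefore $\bm T(s)$, has all its poles in the open left half-plane, i.e., is stable.

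I do not expect a genuine obstacle here: the statement is essentially bookkeeping, and the only point needing care is to make explicit that the paper's $\bigoplus$ interleaves nodal coordinates through the isometries $\bm P_n$ and is block diagonal only after the reordering $\bm\Pi$, together with the standard fact that pole locations are invariant under a constant similarity and can be read off block-wise. If membership in $\mathcal{RH}_\infty$ is wanted rather than stability alone, I would additionally note that properness and real-rationality of each block are inherited entrywise by the block-diagonal matrix, so these properties transfer to $\bm T$ as well.
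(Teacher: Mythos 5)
Your proposal is correct and follows essentially the same argument as the paper, whose proof is simply that the poles of $\bm T(s)$ are the union of the poles of its blocks, hence all lie in the open left half-plane. The extra bookkeeping you supply --- the constant permutation $\bm\Pi$ relating the interleaved stacking to a genuinely block-diagonal form, and the remark on $\mathcal{RH}_\infty$ membership --- is a harmless elaboration of the same idea, not a different route.
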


\begin{proof}
    The transfer function ${\bm T}(s)$ is stable, because the set of its poles is the union of the poles of its blocks. 
\end{proof}

\begin{lem}
\label{lem:block_sectoriality}
    Let $\Tn_1,\ldots,\Tn_N$ be frequency-wise sectorial transfer functions. Then, $\bm T(s) = \bigoplus_n T_n(s)$ is frequency-wise sectorial if and only if 
    \begin{align}\label{eq:cond-fws}
        \max_n\overline{\phi}\left({\bm T}_n(s)\right) - \min_n\underline{\phi}\left({\bm T}_n(s)\right) &< \pi\, ,
    \end{align}
    for all $s\in j[0,\infty]$, cf. \eqref{eq:small phase theorem, block structure version, sectoriality condition H}.
\end{lem}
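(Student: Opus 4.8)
The plan is to prove the stated equivalence pointwise in $s$ and then quantify over frequencies. I would fix $s\in j\R$ and reduce the claim to showing that $\bm T(s)$ — which, after the unitary permutation that renders $\bigoplus_n \Tn_n(s)$ genuinely block diagonal, is an ordinary block-diagonal matrix — satisfies $0\notin W(\bm T(s))$ if and only if $\max_n\overline{\phi}(\Tn_n(s)) - \min_n\underline{\phi}(\Tn_n(s)) < \pi$. The restriction to $s\in j[0,\infty]$ in \eqref{eq:cond-fws} would then follow from the conjugate symmetry $\bm T(\overline{s}) = \overline{\bm T(s)}$ of real-rational transfer matrices, which merely reflects $W(\bm T(s))$ across the real axis and leaves the phase spread invariant. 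Throughout I would suppress the argument $s$ and lean on the convex-hull identity \eqref{eq:conv-hull}, $W(\bm T) = {\rm Conv}(W(\Tn_1),\dots,W(\Tn_N))$.

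For the forward direction I would argue that if $\bm T$ is sectorial, i.e. $0\notin W(\bm T)$, then its numerical range is a convex set avoiding the origin and hence cannot contain points whose arguments span a full half-turn, giving $\overline{\phi}(\bm T) - \underline{\phi}(\bm T) < \pi$. Since sectoriality implies semi-sectoriality, I would invoke \eqref{eq:phase bounds block matrix} to rewrite $\overline{\phi}(\bm T) = \max_n\overline{\phi}(\Tn_n)$ and $\underline{\phi}(\bm T) = \min_n\underline{\phi}(\Tn_n)$, which is precisely \eqref{eq:cond-fws}.

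The reverse direction carries the substance, and is where I expect the one genuinely delicate step. Assuming \eqref{eq:cond-fws}, set $\Theta = \max_n\overline{\phi}(\Tn_n)$ and $\theta = \min_n\underline{\phi}(\Tn_n)$ with $\Theta - \theta < \pi$. Each sectorial $W(\Tn_n)$ is convex and avoids the origin, so it sits inside the closed cone $\{r e^{i\psi} : r\geq 0,\ \theta\leq\psi\leq\Theta\}$; because $\Theta-\theta<\pi$ this cone is itself convex, so \eqref{eq:conv-hull} places all of $W(\bm T)$ inside it. The remaining task is to exclude the apex $0$: after rotating by $e^{-i(\theta+\Theta)/2}$, every nonzero cone point acquires a rotated argument in $[-(\Theta-\theta)/2, (\Theta-\theta)/2] \subset (-\pi/2, \pi/2)$ and hence strictly positive real part, so any convex combination of such points keeps strictly positive real part and cannot be $0$. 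This yields $0\notin W(\bm T)$.

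I expect the main obstacle to be precisely this half-plane/apex argument, since it is where the strict inequality in \eqref{eq:cond-fws} becomes indispensable: were the spread permitted to equal $\pi$, the enclosing cone would degenerate into a half-plane through the origin, and a convex combination of two antipodal nonzero points could hit $0$, breaking sectoriality while preserving semi-sectoriality. Everything else reduces to the two recalled properties of the numerical range, so no additional computation is required.
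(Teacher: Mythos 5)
Your proof is correct and takes essentially the same route as the paper's: both directions rest on the convex-hull identity \eqref{eq:conv-hull} combined with the phase-bound identity \eqref{eq:phase bounds block matrix} for block-diagonal matrices. The only difference is that you spell out the apex-exclusion step (rotating so that every point of the enclosing cone has strictly positive real part, hence so does every convex combination), which the paper compresses into the one-line assertion that the convex hull of origin-avoiding sets lying in a sector of angle less than $\pi$ still avoids the origin.
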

\begin{proof}

     Due to \eqref{eq:conv-hull}, we have that $W(\bm T)$ is the convex hull of all $W(\Tn_n)$. 
     Therefore, if \eqref{eq:cond-fws} is satisfied for all, $W(\bm T)$ is contained in a sector of angle $\delta(\bm T) < \pi$. 
     Furthermore, as none of the $W({\bm T}_n)$ contain the origin, $W(\bm T)$ does not contains the origin. 
     We conclude that ${\bm T}$ is frequency-wise sectorial. 
     Similarly, if ${\bm T}$ is frequency-wise sectorial, then none of the $W({\bm T}_n)$ contains the origin, and they all lie in a sector of angle smaller than $\pi$ and \eqref{eq:cond-fws} holds. 
     All of the above holds for any $s\in j[0,\infty]$, which concludes the proof. 
\end{proof}

\begin{lem}\label{lem:semi_stability_block}
    Let $\Te_1,...,\Te_E$ be semi-stable transfer functions and let us define $\bm{\mathcal T}(s) = \bigoplus_e\Te_e(s)$. 
    Let ${\bm B}$ be a complex matrix of appropriate dimensions. 
    Then both $\bm{\mathcal{T}}(s)$ and $\XTX{s}$ are semi-stable.
\end{lem}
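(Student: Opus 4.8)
The plan is to reduce both claims to the elementary fact that forming a direct sum and conjugating by a constant matrix can only relocate or cancel poles, never create them at new locations. First I would treat $\bm{\mathcal{T}}(s) = \bigoplus_e \Te_e(s)$. In its natural block-diagonal basis the set of poles of $\bm{\mathcal{T}}(s)$ is exactly the union of the pole sets of the individual blocks $\Te_e(s)$. Since each $\Te_e$ is semi-stable by hypothesis, every such pole lies in the closed left half-plane, and hence so does their union. This establishes that $\bm{\mathcal{T}}(s)$ is semi-stable; it is the direct analogue of Lemma~\ref{lem:stability_block}, with the closed left half-plane in place of the open one.

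For $\XTX{s}$ I would argue at the level of matrix entries. Because $\bm B$ is a constant matrix, every entry of $\bm B^\dagger \bm{\mathcal{T}}(s) \bm B$ is a fixed linear combination, with coefficients drawn from the entries of $\bm B$ and $\bm B^\dagger$, of the entries of $\bm{\mathcal{T}}(s)$. A pole of such a linear combination can occur only at a point where at least one of the summands already has a pole; consequently the pole set of $\XTX{s}$ is contained in the pole set of $\bm{\mathcal{T}}(s)$. By the first step this set lies in the closed left half-plane, so $\XTX{s}$ is semi-stable as well. (Properness is likewise inherited, since multiplication by constant matrices does not raise the degree of the numerator relative to the denominator.)

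The only point that requires care, and the nearest thing to an obstacle, is to make explicit that pre- and post-multiplication by the constant matrices $\bm B^\dagger$ and $\bm B$ cannot introduce spurious right-half-plane poles. Working entrywise settles this at once: linear combination may cancel poles when the coefficients conspire, but it never produces a pole at a location where none of the summands has one. I would therefore emphasize that semi-stability is a statement purely about the location of poles, so that no assumption on $\bm B$ being square, invertible, or rank-preserving is needed, and no finer control over the behavior of $\bm{\mathcal{T}}(s)$ on the imaginary axis enters the argument.
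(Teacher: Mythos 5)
Your proposal is correct and follows essentially the same route as the paper's own proof: the poles of $\bm{\mathcal{T}}(s)=\bigoplus_e\Te_e(s)$ are the union of the blocks' poles, and conjugation by the constant matrix $\bm B$ cannot create new poles, so the poles of $\XTX{s}$ are a subset of those of $\bm{\mathcal{T}}(s)$ and hence lie in the closed left half-plane. Your entrywise linear-combination argument simply makes explicit the step the paper asserts without elaboration (``the matrix $\bm B$ cannot introduce new poles''), which is a welcome addition but not a different proof.
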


\begin{proof}
    The transfer function $\bm{\mathcal{T}}(s)$ is semi-stable, because the set of its poles is the union of the poles of its blocks. 
    As the matrix ${\bm B}$ cannot introduce new poles, the poles of $\XTX{s}$ form a subset of the poles of ${\bm T}(s)$. 
    Therefore, $\XTX{s}$ is semi-stable.
\end{proof}

\begin{lem}\label{lem:semi_sectoriality_block}
    Let $\Te_1,...,\Te_E$ be frequency-wise semi-sectorial transfer functions and let us define ${\bm T}(s) = \bigoplus_e\Te_e(s)$. 
    Assume further that 
    \begin{align}
        \max_e\overline{\phi}(\Te_e(s)) - \min_e\underline{\phi}(\Te_e(s)) &\leq \pi\, ,
        \label{eq:assumption1}
    \end{align}
    for all $s \in j\R\setminus j\Omega$,
    where $j \Omega$ is the union of the poles of $\Te_1,...,\Te_E$ that lie on the imaginary axis, cf. \eqref{eq:small phase theorem, block structure version, semi-sectoriality condition G}.
    Assume that $\Te_1,...,\Te_E$ are all frequency-wise semi-sectorial, and assume furthermore that they are semi-sectorial along the indented imaginary axis avoiding the poles of all $\Te_e (s)$ for indents smaller than some finite $\epsilon^*$. 
    Finally, assume that $\XTX
{s}$ has constant rank along this indented imaginary axis for some constant complex matrix ${\bm B}$ of appropriate dimensions.
    Then $\XTX{s}$ is frequency-wise semi-sectorial. 
\end{lem}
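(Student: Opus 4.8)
The plan is to reduce everything to two pointwise facts about numerical ranges that are already recorded in the preliminaries: the convex-hull identity \eqref{eq:conv-hull} for block-diagonal matrices, and the sub-field inclusion \eqref{eq:sub-field} for compressions of the form $\bm B^\dagger(\cdot)\bm B$. The strategy is to first establish that the block-diagonal $\bm{\mathcal T}(s)=\bigoplus_e\Te_e(s)$ is semi-sectorial at each frequency under consideration, and then to transport this property to $\XTX{s}$ through the compression by $\bm B$, tracking the phase extremes at every step.

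First I would fix a frequency $s$ at which all blocks $\Te_e(s)$ are semi-sectorial, which holds by hypothesis on $j\R\setminus j\Omega$. The angle condition \eqref{eq:assumption1} states that $\max_e\overline{\phi}(\Te_e(s))-\min_e\underline{\phi}(\Te_e(s))\le\pi$, so writing $\overline{\psi}:=\max_e\overline{\phi}(\Te_e(s))$ and $\underline{\psi}:=\min_e\underline{\phi}(\Te_e(s))$, every $W(\Te_e(s))$ is contained in a single closed sector $\mathcal S$ with argument range $[\underline{\psi},\overline{\psi}]$, i.e. opening at most $\pi$ and apex at the origin. Since $\mathcal S$ is convex, \eqref{eq:conv-hull} immediately yields $W(\bm{\mathcal T}(s))\subseteq\mathcal S$; as the origin is never in the interior of a sector of opening at most $\pi$, we obtain $0\notin\mathrm{int}\,W(\bm{\mathcal T}(s))$ and $\delta(\bm{\mathcal T}(s))\le\pi$, so $\bm{\mathcal T}(s)$ is semi-sectorial. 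Being semi-sectorial, \eqref{eq:phase bounds block matrix} identifies its phase extremes as $\overline{\phi}(\bm{\mathcal T}(s))=\max_e\overline{\phi}(\Te_e(s))$ and $\underline{\phi}(\bm{\mathcal T}(s))=\min_e\underline{\phi}(\Te_e(s))$.

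Next I would pass to the compression. By \eqref{eq:sub-field} we have $W'(\XTX{s})\subseteq W'(\bm{\mathcal T}(s))\cup\{0\}$; since $W'(\bm{\mathcal T}(s))$ lies in the sector $\mathcal S$ and $\{0\}$ is its apex, the whole angular field $W'(\XTX{s})$ remains inside $\mathcal S$. Hence $\XTX{s}$ is again semi-sectorial, and the phase inequalities following \eqref{eq:sub-field} give $\overline{\phi}(\XTX{s})\le\overline{\phi}(\bm{\mathcal T}(s))$ and $\underline{\phi}(\XTX{s})\ge\underline{\phi}(\bm{\mathcal T}(s))$, so that $\delta(\XTX{s})\le\delta(\bm{\mathcal T}(s))\le\pi$. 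This establishes semi-sectoriality of $\XTX{s}$ at every $s\in j\R\setminus j\Omega$.

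The delicate part is the behaviour on the indented contour, i.e. the $\epsilon$-detours around the poles in $j\Omega$, where \eqref{eq:assumption1} is not assumed directly and where the blocks may blow up in magnitude. Here I would rely on the standing hypotheses that each $\Te_e$ is semi-sectorial along the indented axis for indents below $\epsilon^*$, together with the constant-rank assumption on $\XTX{s}$. The constant rank is precisely what keeps the phases of the compressed matrix well-defined and free of jumps across the poles, so that the pointwise semi-sectoriality obtained above extends consistently to the entire contour and can be fed into Theorem~\ref{thm:small-phase}. I expect verifying this limiting and continuity behaviour on the detours, rather than the two numerical-range inclusions, to be the main obstacle.
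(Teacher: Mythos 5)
Your pointwise argument is correct and coincides with the paper's core step: combining the convex-hull property \eqref{eq:conv-hull} with the phase-spread condition \eqref{eq:assumption1} to get semi-sectoriality of $\bigoplus_e\Te_e(s)$, then using the compression inclusion \eqref{eq:sub-field} to transfer it to $\XTX{s}$. You actually spell out these two inclusions in more detail than the paper does.

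The genuine gap is exactly the part you defer to the end: semi-sectoriality has to be established along the whole contour, detours included, and your proposed mechanism for this does not work. Note first that in the paper's (admittedly confusing) notation, $j\R\setminus j\Omega$ \emph{is} the indented imaginary axis, so \eqref{eq:assumption1} is in fact assumed on the detours as well; your premise that it ``is not assumed directly'' there is a misreading. More importantly, your claim that the constant-rank assumption on $\XTX{s}$ ``keeps the phases of the compressed matrix well-defined and free of jumps'' and thereby extends the pointwise property to the detours is not an argument: constant rank of the compression does not by itself constrain its numerical range on the detours, and in the paper that assumption plays a different role (it is simply the remaining ingredient of the definition of frequency-wise semi-sectoriality, which is why it is hypothesized rather than proved). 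What the paper actually does is (i) observe that each $\Te_e(s)$ is meromorphic, so the determinants of its minors are meromorphic and the points where the rank of a block changes are isolated; hence one can choose a common indent $\epsilon<\epsilon^*$ such that for all $\epsilon'\le\epsilon$ the indented axis avoids every rank-changing point of every block; (ii) note that on this one common contour the standing hypotheses give semi-sectoriality and constant rank of every block, so your two numerical-range inclusions apply verbatim at every point of the contour, detours included; (iii) invoke the assumed constant rank of $\XTX{s}$ to conclude. In short, the detours are handled by re-running your pointwise argument there — the hypotheses are precisely strong enough for that — and the one genuinely new idea needed is the isolated-zeros/meromorphy argument guaranteeing a single admissible indentation $\epsilon$, not a continuity argument via the rank of the compression.
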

\textit{Remark:}
$\bm{\mathcal T(s)}$ is covered with $\bm B = \bm I$.

\begin{proof}

    First observe that if a meromorphic $\Te_e(s)$ has constant rank $r$ on a contour, it has constant rank on any infinitesimal deformation of the contour. A matrix of rank $r$ has a minor of order $r$ with non-zero determinant, and the determinants of all minors of order larger than $r$ are zero. As the minors are meromorphic functions, they are either identically zero, or their zeros are isolated points. Thus the rank can only change at isolated points of the meromorphic function. As the rank is constant on the contour, none of these points can be on the contour and we can deform the contour avoiding these points.

    Take an $\epsilon < \epsilon^*$ such that for all $\epsilon' \leq \epsilon$, the imaginary axis with $\epsilon'$ indentation at $j \Omega$ does not hit a rank changing point of any $\Te_e(s)$, $e\in\{1,...,E\}$.
    
    By assumption, for all $e\in\{1,...,E\}$, 
    $\Te_e(s)$ is semi-sectorial and has constant rank on this $\epsilon$-indented imaginary axis (contour).

    Combining \eqref{eq:sub-field}, \eqref{eq:conv-hull}, and \eqref{eq:assumption1}, semi-sectoriality of $\Te_1(s),...,\Te_E(s)$ implies semi-sectoriality of $\XTX{s}$, for $s\in j\R$.

    Furthermore, by assumption, $\XTX{s}$ has constant rank along the $\epsilon$-indented imaginary axis. 

    Altogether, the above implies that $\XTX{s}$ is frequency-wise semi-sectorial, which concludes the proof.
\end{proof}

\subsection{Proof of Proposition~\ref{thm:small_phase_theorem_block}}

\begin{proof}
By Lemma~\ref{lem:stability_block}, $\bm H  = \bigoplus_n{\bm T}_n$ is stable.
By Lemma~\ref{lem:block_sectoriality}, $\bm H$ is frequency-wise sectorial if \eqref{eq:small phase theorem, block structure version, sectoriality condition H} holds. 
By Lemma~\ref{lem:semi_stability_block}, $\bm G = {\bm B}^\dagger\bigoplus_e\Te_e{\bm B}$ is semi-stable.
By Lemma \ref{lem:semi_sectoriality_block}, $\bm G$ is frequency-wise semi-sectorial, if \eqref{eq:small phase theorem, block structure version, semi-sectoriality condition G} and the constant rank condition hold.

Using one more time the convex hull property \eqref{eq:conv-hull}, in particular \eqref{eq:phase bounds block matrix}, and the subset property \eqref{eq:sub-field}, the assumptions \eqref{eq:small phase theorem, block structure version, stability condition sup}-\eqref{eq:small phase theorem, block structure version, stability condition inf}
yield
\begin{align}
    \sup_{s\notin j\Omega}\left[\overline{\phi}\left(\bigoplus_n{\bm T}_n\right) + \overline{\phi}\left({\bm B}^\dagger\bigoplus_e\Te_e{\bm B}\right)\right] &< \pi\, , \\
    \inf_{s\notin j\Omega}\left[\underline{\phi}\left(\bigoplus_n{\bm T}_n\right) + \underline{\phi}\left({\bm B}^\dagger
    \bigoplus_e\Te_e{\bm B}\right)\right] &> -\pi\, ,
\end{align}
where $\Tn_n$ and $\Te_e$ are functions of $s$.
These are the phase conditions \eqref{eq:small phase condition max phases}-\eqref{eq:small phase condition min phases} of Theorem \ref{thm:small-phase}.
All in all, the system $\left(\bigoplus_n{\bm T}_n\right)\#\left({\bm B}^\dagger\bigoplus_e\Te_e{\bm B}\right)$ then satisfies all assumptions and conditions of Theorem~\ref{thm:small-phase} and is therefore stable, which concludes the proof.
\end{proof}

}

\bibliography{zotero}

\newpage

\end{document}